\documentclass[]{article}
\usepackage[utf8]{inputenc}
\usepackage{amsmath}
\usepackage{amsfonts}
\usepackage{amssymb}
\usepackage{graphicx}

\usepackage{amsthm}
\theoremstyle{plain}
\usepackage{tikz}
\usetikzlibrary{cd}
\usepackage{mathtools}

\usepackage{hyperref}

\newtheorem{thm}{Theorem}
\newtheorem{definition}[thm]{Definition}
\newtheorem{cor}[thm]{Corollary}
\newtheorem{lem}[thm]{Lemma}
\newtheorem{rem}[thm]{Remark}
\newtheorem{prop}[thm]{Proposition}

\newtheorem{thmintro}{Theorem}
\newtheorem*{conjintro}{Conjecture}

\newcommand{\R}{\mathbb{R}}
\newcommand{\C}{\mathbb{C}}
\newcommand{\Z}{\mathbb{Z}}

\newcommand{\N}{\mathbb{N}}

\newcommand{\cC}{\mathcal{C}}

\newcommand{\del}{\partial}
\newcommand{\delbar}{\bar{\partial}}

\newcommand{\im}{\operatorname{im}}
\newcommand{\pr}{\operatorname{pr}}

\newcommand{\Id}{\operatorname{Id}}
\newcommand{\fg}{\mathfrak{g}}
\newcommand{\fk}{\mathfrak{k}}
\newcommand{\fn}{\mathfrak{n}}
\newcommand{\Lie}{\operatorname{Lie}}

\mathtoolsset{centercolon=true}
\newcommand{\Cdot}{{\raisebox{-0.7ex}[0pt][0pt]{\scalebox{2.0}{$\cdot$}}}}
\mathchardef\mhyphen="2D


\usepackage{authblk}
\pagestyle{plain}
\title{Fr\"olicher spectral sequence and Hodge structures on the cohomology of complex parallelisable manifolds}
\author{Hisashi Kasuya, Jonas Stelzig}

\begin{document}
\maketitle
\begin{abstract}
\noindent For complex parallelisable manifolds $\Gamma\backslash G$, with $G$ a  solvable or semisimple complex Lie group, the Fr\"olicher spectral sequence degenerates at the second page. In the solvable case, the de-Rham cohomology carries a pure Hodge structure. In contrast, in the semisimple case, purity depends on the lattice, but there is always a direct summand of the de Rham cohomology which does carry a pure Hodge structure and is independent of the lattice.
\end{abstract}

\section{Introduction}
This article is mainly concerned with the cohomology of complex manifolds of the form $X=\Gamma\backslash G$, where $G$ is a connected complex Lie group and $\Gamma\subseteq G$ a cocompact, discrete subgroup. Every such manifold has trivial holomorphic tangent bundle and vice versa Wang \cite{wang_complex_1954}, cf. also \cite[Section 1.12]{winkelmann_complex-analytic_1995}, has shown that every compact complex parallelisable manifold is of this form and we may assume $G$ to be simply connected.
\\

We consider the Fr\"olicher spectral sequence of a compact complex parallelisable manifold $X=\Gamma\backslash G$.
If the Fr\"olicher spectral sequence of a compact complex manifold  degenerates at the first page, then every holomorphic form is closed.
By this fact, we can easily say that the Fr\"olicher spectral sequence of $X$  degenerates at the first page if and only if $G$ is abelian.
Hence, unlike compact K\"ahler manifolds, if $G$ is not abelian     $X$ does not satisfy $\del\delbar$-Lemma as in \cite{DGMS}.\\

If $G$ is nilpotent, the Dolbeault cohomology and de Rham cohomology can be computed from the Lie algebra of $G$ \cite{sakane_compact_1976}. In particular, it is independent of the lattice $\Gamma$. Furthermore, it was recently shown in \cite{popovici_higher-page_2020-1} by Popovici, Ugarte and the second named author that they are page-1-$\del\delbar$-manifolds, meaning that the Fr\"olicher spectral sequence degenerates at the second page and the Hodge filtration induces a pure Hodge structure on the de Rham cohomology. One of the goals of this article is to understand to what extend these results hold for not necessarily nilpotent $G$.\\

Unlike in the nilpotent case, in general the dimensions of Dolbeault and de Rham cohomology can depend on the lattice (see e.g. \cite[Ex. 3.4]{angella_bott-chern_2017} for $G$ solvable and \cite{winkelmann_complex-analytic_1995} or \cite{ghys_deformations_1995} for $G=SL_2(\C)$). 
Nevertheless, it was shown by the first-named author in \cite{kasuya_frolicher_2015} that, for $G$ a solvable complex Lie group, regardless of the choice of $\Gamma$, the Fr\"olicher spectral sequence of degenerates at the second page. Our first result generalizes this by also asserting the existence of a pure Hodge structure on the cohomology of these manifolds.

\begin{thmintro}\label{thm: solv}
	If $G$ is solvable, $X$ is a page-$1$-$\partial\bar\partial$ manifold.
\end{thmintro}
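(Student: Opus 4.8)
The plan is to reduce the statement to a purely homological fact about the double complex of $X$. Using the decomposition of a bounded double complex over a field into indecomposable squares and zigzags, the page-$1$-$\del\delbar$ property is equivalent to the assertion that \emph{every zigzag has length at most $2$}, i.e.\ there are no zigzags of length $\ge 3$. Here an even zigzag of length $4$ would support a nonzero differential $d_2$ in the Fr\"olicher spectral sequence and thus obstruct degeneration at the second page, while an odd zigzag of length $3$ would place a surviving de Rham class across two distinct bidegrees $(p,q)$ and $(p+1,q-1)$, destroying the Hodge decomposition. The even zigzags are already under control: the degeneration at the second page proved in \cite{kasuya_frolicher_2015} forces all even zigzags to have length $\le 2$. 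Hence the genuinely new content is the exclusion of odd zigzags of length $\ge 3$, which is exactly the purity of the de Rham Hodge structure.

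The guiding principle for producing this is that a double complex of the form $(K^\bullet,\del)\otimes(L^\bullet,\delbar)$ — a horizontal $\del$-complex tensored with a vertical $\delbar$-complex — can contain only squares, dots and length-$2$ zigzags: over a field each factor splits into dots and two-term acyclic pieces, and the four resulting tensor products are precisely a dot, the two types of length-$2$ zigzag, and a square. If moreover $L^\bullet=\overline{K^\bullet}$, then complex conjugation acts as the flip of the two factors and sends the dot coming from $H^p(K)\otimes\overline{H^q(K)}$ to the one coming from $H^q(K)\otimes\overline{H^p(K)}$, so that $\overline{E_\infty^{p,q}}=E_\infty^{q,p}$ and the resulting Hodge structure is automatically pure. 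I would therefore aim to exhibit a finite-dimensional model for the cohomology of $X$ of this conjugation-symmetric tensor-product shape.

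Complex parallelisability supplies the first half of this structure for free. The invariant holomorphic $1$-forms $\omega_1,\dots,\omega_n$ trivialise $\Omega^\bullet$ and satisfy $\delbar\omega_i=0$ and $\del\omega_i=d_{\mathrm{CE}}\omega_i\in\Lambda^2\fg^*$, so that $A^{p,q}\cong\Lambda^p\fg^*\otimes A^{0,q}$ with $\delbar=1\otimes\delbar$, giving $E_1^{p,q}\cong\Lambda^p\fg^*\otimes H^{0,q}_{\delbar}(X)$, and there is a conjugate factorisation using the $\bar\omega_i$. When $G$ is nilpotent the invariant forms compute the full cohomology by \cite{sakane_compact_1976}, the invariant bicomplex is literally $(\Lambda^\bullet\fg^*,d_{\mathrm{CE}})\otimes(\Lambda^\bullet\overline{\fg^*},\overline{d_{\mathrm{CE}}})$, and the principle above closes the argument; this recovers \cite{popovici_higher-page_2020-1}. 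For solvable $G$ the invariant forms no longer suffice, so I would instead run the argument on the first author's finite-dimensional model $B_\Gamma^{\bullet,\bullet}$ computing $H^{\bullet,\bullet}_{\delbar}(X)$, assembled from invariant forms twisted by the characters $\alpha_i$ of the adjoint action together with unitarising factors chosen to make them $\Gamma$-invariant. The step to carry out is to show that $B_\Gamma^{\bullet,\bullet}$ is again such a conjugation-symmetric tensor product, equivalently that on it $\del$ preserves the $\Lambda^\bullet\fg^*$-grading (so the induced $d_1$ on $E_1$ is $d_{\mathrm{CE}}\otimes 1$) while $\delbar$ preserves the conjugate grading.

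The main obstacle is exactly this decoupling. The unitarising factors are built from $|\alpha_i|=(\alpha_i\bar\alpha_i)^{1/2}$ and are therefore neither holomorphic nor antiholomorphic, so a priori $\del$ of a model form acquires a component along the antiholomorphic directions (a term $\omega_I\otimes\del\phi$ in the notation above) which would couple the two factors and create a long zigzag. The crux is to choose, within each twisted summand, representatives that are simultaneously $\del$- and $\delbar$-closed, so that these mixed terms vanish on $B_\Gamma^{\bullet,\bullet}$ and the bicomplex genuinely factors; this is where the precise arithmetic of the characters $\alpha_i$ on $\Gamma$, and the semisimplicity of the adjoint action underlying the construction of the model, must enter. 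Once the factorisation is established, matching the algebraic flip on $B_\Gamma^{\bullet,\bullet}$ with complex conjugation on $X$ yields $\overline{E_\infty^{p,q}}=E_\infty^{q,p}$, and together with the degeneration of \cite{kasuya_frolicher_2015} this gives the page-$1$-$\del\delbar$ property.
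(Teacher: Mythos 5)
Your high-level reduction coincides with the paper's actual strategy: by Proposition \ref{prop: page-1-ddbar property} the page-$1$-$\del\delbar$ property is exactly the decomposability into dots, lines and squares, tensor products of a $\del$-complex with a $\delbar$-complex have this shape (Lemma \ref{lem: tensor product p-1-ddbar}), and by Corollary \ref{cor: page-1-ddbar property} it suffices to verify it on a finite-dimensional model with real structure computing $H_{\delbar}(X)$. But the step you explicitly leave open --- the ``decoupling'' of the model into such tensor products --- is precisely the content of the proof, and the mechanism you propose for it would not work. First, a factual slip: in the complex parallelisable case the twisting functions in Kasuya's model $B^{\Cdot}_{\Gamma}=\langle (\bar\alpha_I/\alpha_I)\,\bar x_I : (\bar\alpha_I/\alpha_I)\vert_\Gamma=1\rangle$ are quotients of an antiholomorphic by a holomorphic character; the unitarising factors built from $|\alpha_j|$ that you describe belong to the splitting-type variant (Theorem \ref{thm: splitting type}, following \cite{kasuya-mathz-2013}), and even there the relevant combinations $\alpha\beta^{-1}$, $\bar\alpha\gamma^{-1}$ are holomorphic, resp.\ antiholomorphic. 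Second, ``choosing representatives within each twisted summand that are simultaneously $\del$- and $\delbar$-closed'' begs the question: the generators $(\bar\alpha_I/\alpha_I)\bar x_I$ are in general closed under neither operator, the existence of $d$-closed pure representatives is exactly the purity one is trying to prove, and no change of basis inside a twisted summand can cancel the intrinsic term $\del(\bar\alpha_I/\alpha_I)\wedge\bar x_I$.

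The missing idea is that this term should be \emph{absorbed}, not cancelled, by regrouping. Decompose $\Lambda^{\Cdot}\fg^{1,0}=\bigoplus_\lambda V_\lambda$ into weight spaces of the semisimple part $\mathrm{Ad}_s$ of the adjoint action, and split each twisting function into its holomorphic and antiholomorphic factors, attaching each to the matching side: the real model $C^{\Cdot_1,\Cdot_2}=\Lambda^{\Cdot_1}\fg^{1,0}\otimes B^{\Cdot_2}_\Gamma+\overline{B^{\Cdot_1}_\Gamma}\otimes\Lambda^{\Cdot_2}\fg^{0,1}$ (note the sum is \emph{not} direct; its intersection $\bigoplus_{(\alpha/\bar\alpha)\vert_\Gamma=1}\alpha V_{1/\alpha}\otimes\frac{1}{\bar\alpha}V_{\bar\alpha}$ has to be split off by hand) decomposes as a direct sum of pieces of the form $\alpha V_\lambda\otimes\frac{1}{\bar\alpha}V_{\bar\alpha}$ and their conjugates, indexed by characters with $(\alpha/\bar\alpha)\vert_\Gamma=1$. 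Since $\alpha^{-1}d\alpha\in V_1$ is an invariant holomorphic $1$-form of trivial weight, $d$ restricted to $\alpha V_\lambda$ is pure $\del$ and preserves that space, and dually $d=\delbar$ on $\bar\alpha V_{\bar\lambda}$; so your troublesome mixed term is simply part of the differential $\del\otimes\mathrm{Id}$ of the regrouped factorisation, with no choice of representatives needed. Once this is in place each summand is an honest tensor product of simple complexes and Lemma \ref{lem: tensor product p-1-ddbar} together with Corollary \ref{cor: page-1-ddbar property} finishes the proof; in particular the even/odd zigzag case split and the external degeneration input from \cite{kasuya_frolicher_2015} become unnecessary, as the tensor decomposition rules out all zigzags of length $\geq 3$ simultaneously.
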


The proof will be given in section \ref{sec: solv}. Similar techniques also yield an analogue of this theorem for certain solvmanifolds of splitting type. (see Thm. \ref{thm: splitting type}).\\

In the semisimple case, the situation is more subtle. Computations by Ghys  \cite{ghys_deformations_1995} and Winkelmann \cite{winkelmann_complex-analytic_1995} show that for $G=\operatorname{SL}_2(\C)$, there exist lattices $\Gamma,\Gamma'\subseteq G$ s.t. $\Gamma\backslash G$ is page-$1$-$\del\delbar$ and $\Gamma'\backslash G$ is not. Building on work of  Akhiezer, we give a conceptual explanation of this phenomenon.

Suppose $G$ is semi-simple.
Take a maximal compact subgroup $K\subset G$.
Denote by $\fg$ and  $\fk$ the Lie algebra of $G$ and $K$ respectively. 
Then, associated with the locally symmetric space $Y=\Gamma\backslash G/K$, we have the canonical injection 
$H^{k}(\fg;\fk,\C)\hookrightarrow H^{k}(\Gamma,\C)$ where $H^{k}(\fg,\fk,\C)$ is the relative Lie algebra cohomology and $H^{k}(\Gamma,\C)$ is the group cohomology (see  \cite[Introduction]{schmid_1981} for instance).
In general, this injection is not an isomorphism.

\begin{thmintro}\label{thm: semsim}
	Let $G$ be semisimple.
	\begin{enumerate}
		\item The Fr\"olicher spectral sequence of $X$ degenerates at the second page.
		\item $X$ is a page-$1$-$\partial\bar\partial$ manifold if and only if $H^{k}(\fg,\fk,\C)\cong  H^{k}(\Gamma,\C)$ for any $k\in \Z$.
	\end{enumerate}
\end{thmintro}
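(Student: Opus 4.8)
The plan is to reduce all the relevant invariants of $X=\Gamma\backslash G$ -- the page $E_1^{p,q}$, the graded pieces $\mathrm{gr}^p_F H^{p+q}_{dR}(X)$, and the complex conjugation on $H^\bullet_{dR}(X,\C)$ -- to (relative) Lie algebra cohomology with automorphic coefficients, and then to read off both degeneration and purity from the representation theory of the complex group $G$. Since $X$ is complex parallelisable its holomorphic cotangent bundle is trivial, so $E_1^{p,q}=H^{p,q}_{\delbar}(X)\cong\Lambda^p\fg^*\otimes H^{0,q}(X)$, the first factor being spanned by the left-invariant holomorphic Maurer--Cartan forms. The $(0,\bullet)$-complex is the Chevalley--Eilenberg complex of the antiholomorphic subalgebra $\bar\fg$ with coefficients in $C^\infty(\Gamma\backslash G)$, while the de Rham complex is that of the underlying real Lie algebra $\fg_\R$, whose complexification is $\fg\oplus\bar\fg$. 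Feeding the discrete decomposition $L^2(\Gamma\backslash G)=\bigoplus_\pi m(\pi)\mathcal H_\pi$ (valid because $\Gamma$ is cocompact) into these complexes, in the spirit of Matsushima--Murakami and Akhiezer, I would write $H^{0,q}(X)=\bigoplus_\pi m(\pi)H^q(\bar\fg;\mathcal H_\pi^\infty)$ and $H^k_{dR}(X)=\bigoplus_\pi m(\pi)H^k(\fg_\R;\mathcal H_\pi^\infty)$, singling out the trivial representation (the invariant part) from the genuinely automorphic nontrivial ones.

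For part (1) I would first identify $d_1$ on $E_1^{\bullet,q}=\Lambda^\bullet\fg^*\otimes H^{0,q}(X)$ with the Chevalley--Eilenberg differential of $\fg$ with coefficients in the $\fg$-module $H^{0,q}(X)$; since the connected group $G$ acts trivially on the cohomology of $X$, this coefficient action is trivial, so $E_2^{p,q}\cong H^p(\fg;\C)\otimes H^{0,q}(X)$. Degeneration at $E_2$ I would then obtain by a dimension count: using Zhelobenko's description of the admissible representations of the complex group $G$ as essentially outer tensor products of a $\fg$- and a $\bar\fg$-module, Künneth for $\fg\oplus\bar\fg$ gives $H^k(\fg_\R;\mathcal H_\pi^\infty)\cong\bigoplus_{p+q=k}H^p(\fg;\C)\otimes H^q(\bar\fg;\mathcal H_\pi^\infty)$ (the holomorphic factor carrying only trivial coefficients because $G$ acts trivially on $X$), whence $\sum_{p+q=k}\dim E_2^{p,q}=\dim H^k_{dR}(X)$. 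As $E_\infty$ is a subquotient of $E_2$ of the same total dimension, all higher differentials must vanish.

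For part (2), the invariant part $\bigoplus_{p+q=k}H^p(\fg;\C)\otimes H^q(\bar\fg;\C)$ always carries a pure Hodge structure: complex conjugation on $H^k_{dR}(X,\C)$ comes from the real structure of $\fg_\R$, interchanges $\fg$ with $\bar\fg$, and hence sends bidegree $(p,q)$ to $(q,p)$, so that $\overline{H^{p,q}}=H^{q,p}$ there; this is the lattice-independent pure summand of the abstract. Purity of all of $H^\bullet_{dR}(X)$ is therefore equivalent to the vanishing of every nontrivial automorphic contribution to $H^{0,\bullet}(X)$: a nontrivial class in some $H^{0,q}(X)=E_\infty^{0,q}$ would have to be conjugate to a class in $E_\infty^{q,0}\cong H^q(\fg;\C)$, which is purely invariant, so no conjugate partner exists and $F^\bullet,\overline{F^\bullet}$ fail to be opposed. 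It remains to match this vanishing with the stated condition. Via the splitting of the Hochschild--Serre sequence for the reductive pair $\bar\fk\subset\bar\fg$, one has $H^\bullet(\bar\fg;\mathcal H_\pi^\infty)\cong H^\bullet(\bar\fg,\bar\fk;\mathcal H_\pi^\infty)\otimes H^\bullet(\bar\fk;\C)$, and the relative groups compute the $\pi$-isotypic part of $H^\bullet(Y)=H^\bullet(\Gamma;\C)$ for $Y=\Gamma\backslash G/K$; so the automorphic part of $H^{0,\bullet}(X)$ vanishes exactly when $H^k(\fg,\fk,\C)\cong H^k(\Gamma,\C)$ for all $k$. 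The model is $G=\operatorname{SL}_2(\C)$: here $H^1(\fg;\C)=0$ by Whitehead, so $E_2^{1,0}=0$, and any extra class in $H^1(\Gamma;\C)=H^1(Y)$ -- i.e. positive first Betti number of the hyperbolic $3$-manifold $Y$ -- lands in $H^{0,1}$ with no $(1,0)$-partner, destroying purity, exactly as in the Ghys--Winkelmann examples.

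The main obstacle is the representation-theoretic comparison underlying both parts: establishing, for every nontrivial unitary $\pi$ occurring in $L^2(\Gamma\backslash G)$, the factorisation of $\mathcal H_\pi^\infty$ as a $(\fg\oplus\bar\fg)$-module and the induced Künneth decomposition, and then matching the three vanishing statements -- automorphic $H^{0,\bullet}(X)$, automorphic relative cohomology, and the defect of the injection $H^\bullet(\fg,\fk,\C)\hookrightarrow H^\bullet(\Gamma,\C)$ -- while keeping track of the conjugation $\pi\mapsto\bar\pi$, which occurs with equal multiplicity since $L^2(\Gamma\backslash G)$ is defined over $\R$. The formal degeneration in (1) is comparatively soft; the real work is verifying that a surviving nontrivial contribution always sits in conjugation-asymmetric bidegrees, so that it genuinely obstructs purity rather than accidentally restoring it.
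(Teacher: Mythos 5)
The load-bearing step of your plan fails as stated. Both your degeneration count in (1) and the matching of vanishing conditions in (2) rest on the factorisation $H^k(\fg_\R;\mathcal{H}_\pi^\infty)\cong\bigoplus_{p+q=k}H^p(\fg;\C)\otimes H^q(\bar\fg;\mathcal{H}_\pi^\infty)$, which you justify by Zhelobenko's description of admissible representations of complex groups as ``essentially outer tensor products''. Only the \emph{finite-dimensional} irreducibles are genuinely of the form $V\boxtimes W$; an infinite-dimensional irreducible Harish--Chandra module of a complex semisimple group is a subquotient of a principal series on which $\fg$ and $\bar\fg$ do not act through separate tensor factors, so K\"unneth does not apply, and your parenthetical reason (``the holomorphic factor carrying only trivial coefficients because $G$ acts trivially on $X$'') is a non sequitur: $G$ acts trivially neither on $X$ nor on $\mathcal{H}_\pi^\infty$, and triviality of the $G$-action on $H^{0,q}(X)$ is itself part of Akhiezer's theorem, not a consequence of connectedness --- on Dolbeault cohomology the $G$-action is the adjoint one on the $\Lambda^p\fg^{1,0}$-factor, so the homotopy-invariance argument that works for de Rham proves nothing here. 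Two further steps in the same vein are asserted but not available off the shelf: the $L^2$-decomposition of $L^2(\Gamma\backslash G)$ is a Hilbert direct sum of infinitely many summands and does not formally compute the cohomology of the smooth Chevalley--Eilenberg complexes (this analytic reduction is precisely the content of Akhiezer's paper and of Matsushima--Murakami); and the Matsushima-type formula computes $H^\bullet(\Gamma,\C)$ from the pair $(\fg\oplus\bar\fg,\fk_\C)$ with $\fk_\C$ embedded diagonally, not from $(\bar\fg,\bar\fk)$, so your splitting $H^\bullet(\bar\fg;\mathcal{H}_\pi^\infty)\cong H^\bullet(\bar\fg,\bar\fk;\mathcal{H}_\pi^\infty)\otimes H^\bullet(\bar\fk;\C)$ is likewise unestablished. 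You flag all of this yourself as ``the main obstacle'', and rightly so: it is the theorem, not a technicality.

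For contrast, the paper avoids the automorphic analysis entirely, quoting Akhiezer only as a black box for $H^{p,q}_{\delbar}(X)\cong\Lambda^p\fg^{1,0}\otimes H^q(\Gamma,\C)$ with trivial module structure. The de Rham dimension count that you want from K\"unneth, namely $\dim H^k_{dR}(X)=\sum_{p+q=k}\dim H^p_{dR}(K)\cdot\dim H^q(\Gamma,\C)$, is obtained topologically: the Serre spectral sequence of the principal $K$-bundle $X\to Z=\Gamma\backslash G/K$ degenerates at $E_2$, proved by comparing it via the averaging map $\mu$ with the Hochschild--Serre sequence of $\fk\subset\fg$ (which degenerates because $H^\bullet(\fg;\fk)\cong H^\bullet(\fk_\C)$, using $\fg=\fk\oplus J\fk$) together with multiplicativity; a second proof maps the complex $A_Y^\Gamma$ computing $H^\bullet(\Gamma,\C)$ into $A_X$ and shows the edge maps onto $E_r^{0,q}$ are surjective. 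For (2), your ``no conjugate partner'' heuristic is made precise by the symmetry constraint of Remark \ref{rem: page-1-ddbar}: the page-$1$-$\del\delbar$ property forces $\dim E_2^{p,q}=\dim E_2^{q,p}$, hence $\dim H^k_{dR}(K)=\dim H^k(\Gamma,\C)$, turning the canonical injection $H^k(\fg,\fk,\C)\hookrightarrow H^k(\Gamma,\C)$ into an isomorphism. In the converse direction your sketch stops too early: vanishing of the automorphic part of $H^{0,\Cdot}$ would give purity of an associated graded, but the page-$1$-$\del\delbar$ property concerns the filtrations on $H^\Cdot_{dR}(X)$ itself and is not detected by $E_\infty$ alone; the paper closes this by showing the hypothesis makes the inclusion $\Lambda_\fg\subseteq A_X$ a Dolbeault quasi-isomorphism (injective by averaging, bijective by Akhiezer's dimension formula), so that Proposition \ref{prop: lie-1deldelbar} --- which secretly controls Bott--Chern and Aeppli cohomology via Corollary \ref{cor: page-1-ddbar property} --- applies. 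Some comparison of this kind would still be needed even after your K\"unneth step were repaired.
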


 In view of Theorems \ref{thm: solv} and \ref{thm: semsim}, it appears natural to make the following:
\begin{conjintro}For any compact complex parallelisable manifold $X$, the Fr\"olicher spectral sequence degenerates at the second page.
\end{conjintro}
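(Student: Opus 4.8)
The plan is to pass to an explicit invariant model. Since $G$ is a complex Lie group, its real Lie algebra $\fg_0$ complexifies as a direct sum of ideals $(\fg_0)_\C=\fg^{1,0}\oplus\fg^{0,1}$ with $\fg^{1,0}\cong\fg$, $\fg^{0,1}\cong\bar\fg$, and crucially $[\fg^{1,0},\fg^{0,1}]=0$. Trivialising the cotangent bundle by the left-invariant coframe, a $(p,q)$-form on $X=\Gamma\backslash G$ becomes an element of $\Lambda^p\fg^*\otimes\Lambda^q\bar\fg^*\otimes C^\infty(\Gamma\backslash G)$, and because the two factors commute, $\delbar$ is the Chevalley--Eilenberg differential of $\bar\fg$ on $\Lambda^q\bar\fg^*\otimes C^\infty(\Gamma\backslash G)$ (with $\bar\fg$ acting by the right regular representation), tensored with the identity on $\Lambda^p\fg^*$. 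Hence
\[
H^{p,q}_{\delbar}(X)\cong\Lambda^p\fg^*\otimes H^q\bigl(\bar\fg;C^\infty(\Gamma\backslash G)\bigr),
\]
and the de Rham cohomology is $H^\bullet\bigl((\fg_0)_\C;C^\infty(\Gamma\backslash G)\bigr)$. Under these identifications the Fr\"olicher spectral sequence is precisely the Hochschild--Serre spectral sequence of the ideal $\bar\fg\triangleleft(\fg_0)_\C$, with $d_1=\del$ the Chevalley--Eilenberg differential of $\fg^{1,0}$, so that $E_2^{p,q}\cong H^p\bigl(\fg;H^q(\bar\fg;C^\infty(\Gamma\backslash G))\bigr)$.

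For part (1) I would prove this spectral sequence degenerates at $E_2$. Cocompactness of $\Gamma$ yields a decomposition $C^\infty(\Gamma\backslash G)=\bigoplus_\pi m_\pi H_\pi^\infty$ into smooth vectors of unitary irreducibles under the right regular representation, reducing the problem to each summand. The two commuting actions make $H^q(\bar\fg;H_\pi^\infty)$ a $\fg$-module, and degeneration amounts to the vanishing of the higher Hochschild--Serre differentials on these pieces. I would deduce this from the semisimplicity of $\fg$---via Whitehead's lemma $H^p(\fg;V)\cong H^p(\fg)\otimes V^{\fg}$, once the coefficients are suitably finite---together with the transgressive generation of $H^\bullet(\fg)$ by bi-invariant classes, which are represented by forms closed for the full differential, exactly as in the trivial-coefficient K\"unneth situation. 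A comparison of total dimensions with $\dim H^\bullet_{\mathrm{dR}}(X)$ then forces degeneration at $E_2$.

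For part (2), with degeneration in hand, page-$1$-$\del\delbar$ is equivalent to purity of the induced Hodge structure. Complex conjugation interchanges $\fg^{1,0}\leftrightarrow\fg^{0,1}$, giving antilinear isomorphisms $\overline{E_2^{p,q}}\cong H^p\bigl(\bar\fg;H^q(\fg;\,\cdot\,)\bigr)$, which purity requires to match $E_2^{q,p}\cong H^q\bigl(\fg;H^p(\bar\fg;\,\cdot\,)\bigr)$. The trivial representation contributes $H^\bullet(\fg)\otimes H^\bullet(\bar\fg)$, the image of the bi-invariant forms; this is manifestly conjugation-symmetric, hence pure, and it is precisely $H^\bullet(\fg,\fk;\C)$ sitting inside $H^\bullet(\Gamma,\C)=H^\bullet(Y)$ for $Y=\Gamma\backslash G/K$. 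I would then show that a nontrivial $\pi$ contributes to $H^\bullet_{\mathrm{dR}}(X)$ exactly when it contributes to the $(\fg_0,K)$-cohomology computing $H^\bullet(Y)$, and that whenever it does, its bigraded contribution is \emph{not} conjugation-symmetric and so breaks purity. Granting this, purity holds iff no nontrivial $\pi$ occurs, that is, iff $H^\bullet(\fg,\fk;\C)\cong H^\bullet(\Gamma,\C)$.

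The main obstacle is the representation-theoretic input underlying part (2): computing the bigraded pieces $H^p\bigl(\fg;H^q(\bar\fg;H_\pi^\infty)\bigr)$ for nontrivial unitary $\pi$, establishing both their asymmetry under conjugation and the precise equivalence between their nonvanishing and the occurrence of $\pi$ in the cohomology of the locally symmetric space $Y$. This is where Akhiezer's analysis of these representations of complex semisimple groups is essential, and where one must handle the analytic subtleties---smooth versus $L^2$ vectors, admissibility, and finiteness of the relevant Lie algebra cohomologies---that justify the termwise reductions used above.
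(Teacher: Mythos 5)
You have not proved the statement, and neither does the paper: it is stated there as a \emph{conjecture}. The paper establishes degeneration at the second page only in the two extreme cases---$G$ solvable (Theorem \ref{thm: solv}, via the weight decomposition of left-invariant forms and the subcomplex $C^{\Cdot_1,\Cdot_2}$ built from characters) and $G$ semisimple (Theorem \ref{thm: semsim}, via Akhiezer's computation of $H^{p,q}_{\delbar}(X)$ and the Serre spectral sequence of $X\to\Gamma\backslash G/K$)---plus Lescure's general vanishing of $d_2:E_2^{0,1}\to E_2^{2,0}$ as evidence. Your proposal does not close this gap: from the moment you invoke Whitehead's lemma, a maximal compact subgroup $K$, relative cohomology $H^{\Cdot}(\fg,\fk;\C)$, the locally symmetric space $Y=\Gamma\backslash G/K$, and Akhiezer's analysis, you have silently restricted to $G$ semisimple. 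Even if every step were completed, you would have reproved Theorem \ref{thm: semsim}---indeed your ``part (2)'' is exactly Theorem \ref{thm: semsim}(2), which is not part of the conjecture at all, since the conjecture asserts only degeneration and makes no purity claim. The genuinely open case, which your argument never touches, is a group of mixed Levi type $G=S\ltimes R$ with $S$ semisimple and $R$ a nontrivial radical: there Whitehead's lemma fails, the lattice need not interact well with the Levi decomposition, and no analogue of Akhiezer's theorem is available.

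There are also analytic gaps even within the semisimple case as you sketch it. The space $C^\infty(\Gamma\backslash G)$ is \emph{not} the algebraic direct sum $\bigoplus_\pi m_\pi H_\pi^\infty$; the discrete decomposition holds for $L^2(\Gamma\backslash G)$ as a Hilbert-space direct sum, of which the smooth vectors of the summands form only a dense subspace, so the termwise reduction of the Hochschild--Serre spectral sequence requires serious justification. Likewise, Whitehead's lemma in the form $H^p(\fg;V)\cong H^p(\fg)\otimes V^{\fg}$ needs $V$ to be a sum of finite-dimensional modules, which is not automatic for $H^q(\bar{\fg};H_\pi^\infty)$; in the paper precisely these difficulties are delegated to Akhiezer's theorem, quoted as a black box. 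To your credit, your structural setup---the splitting of the complexified real Lie algebra into two commuting ideals, the identification of the Fr\"olicher spectral sequence with the Hochschild--Serre spectral sequence of the antiholomorphic ideal with coefficients in $C^\infty(\Gamma\backslash G)$, and the resulting $E_2$-description---is correct and is a sensible framework in which to attack the conjecture; but as written the proposal is a partially heuristic sketch of the known semisimple case, not a proof of the open statement.
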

Apart from the results in this article, indication that this might hold is given by \cite{lescure_annulation_2000}, where it is shown that $d_2:E_2^{0,1}(X)\rightarrow E_2^{2,0}(X)$ always vanishes.

\section{Preliminaries}\label{sec: prelim}
\subsection{Page-$1$-$\partial\bar\partial$-Manifolds}
We briefly recall some of the terminology of \cite{popovici_higher-page_2020-1}.
Let $A=(A^{\Cdot,\Cdot},\del,\delbar)$ be a bounded double complex of complex vector spaces and ${\rm Tot}A^\Cdot:=\bigoplus_{p+q=\Cdot}A^{p,q}$ the associated total complex with differential $d:=\del+\delbar$. Associated with the filtration $F^{r}A=\bigoplus_{p\ge r}A^{p,q}$ we get an induced filtration on the total cohomology $H_{dR}^\Cdot(A):=H^\Cdot ({\rm Tot} A)$, still denoted by $F$, and the spectral sequence 
\[
E_{1}^{p,q}(A)=H_{\delbar}^{p,q}(A)\Longrightarrow (H_{dR}^{p+q}(A),F).
\]
Analogously, for the filtration $\overline{F}^rA=\bigoplus_{q\ge r}A^{p,q}$ we obtain a second spectral sequence converging to $H_{dR}(A)$, which we denote by $\bar{E}_r(A)$. It has the spaces $H_{\del}^{p,q}(A)$ on its first page. The Bott-Chern cohomology is defined as
$$H^{p,q}_{BC}(A):= \frac{\ker\left(\del: A^{p,q} \to A^{p+1,q}\right) \cap \ker\left(\delbar: A^{p,q} \to A^{p,q+1}\right)}{{\rm im}\left(\del\delbar : A^{p-1,q-1} \to A^{p,q}\right)} $$
and the Aeppli cohomology as
$$H^{p,q}_{A}(A):= \frac{{\rm ker}\left(\del\delbar : A^{p-1,q-1} \to A^{p,q}\right))}{{\rm im} \left(\del: A^{p-1,q} \to A^{p,q}\right)+ {\rm im}\left(\delbar: A^{p,q-1} \to A^{p,q}\right)} .$$

All double complexes which we will consider will  have finite dimensional cohomology, i.e. that the spaces $H_{\delbar}^{p,q}(A)$ and $H_{\del}^{p,q}(A)$ are finite dimensional for all $p,q\in\Z$ (this also implies that $E_r^{p,q}(A),\bar E_r^{p,q}(A),H_{BC}^{p,q}(A),H_{A}^{p,q}(A),H_{dR}^k(A)$ are finite dimensional \cite[sect. 2]{stelzig_structure_2018}). Moreover, as the notation already suggests, in most cases we will consider double complexes $A$ that are equipped with a real structure, i.e. an antilinear involution $\overline{(.)}$ s.t. $\overline{ A^{p,q}}=A^{q,p}$ and $\overline{\del\bar{a}}=\delbar a$. In this case the second spectral sequence is determined by the first and can be ignored. We denote by $H^{p,q}(A):=\im (H_{BC}^{p,q}(A)\longrightarrow H_{dR}^{p+q}(A))$. We have equalities\[
H^{p,q}(A)=\{[\alpha]\in H^\Cdot ({\rm Tot} A): \alpha\in A^{p,q}\}=(F^p\cap\bar{F}^{q})H_{dR}^{p+q}(A).
\]
Recall \cite{DGMS} that a double complex $A$ as above is said to satisfy the $\del\delbar$-Lemma (or to have the $\del\delbar$-property) if it satisfies one (hence all) of the following equivalent properties:
\begin{enumerate}
	\item $\ker \del \cap \ker \delbar \cap {\rm im} d={\rm im} \del \delbar $
	\item Both spectral sequences degenerate at page $1$ and for all $k\in\Z$, the filtrations $F$ and $\bar{F}$ on $H^{k}_{dR}(A)$ induce a pure Hodge structure of degree $k$, i.e. $H^k_{dR}(A)=\bigoplus_{p+q=k}H^{p,q}(A)$.
	\item Every class in $H_{\delbar}^{p,q}(A)$ admits a $d$-closed pure-type representative and the map $H_{\delbar}^{p,q}(A)\longrightarrow H^{p,q}(A)$ sending a class via such a representative induces a well-defined isomorphism
\[\bigoplus_{p,q\in\Z} H_{\delbar}^{p,q}(A)\rightarrow H^{\Cdot}_{dR} (A)\] (and analogously for $H_{\del}^{p,q}(A)$).

	\item $A$ is a direct sum of complexes of the following types
	\begin{enumerate}
		\item `squares': complexes with a single pure-bidegree generator $a$, s.t. $\del\delbar a\neq 0$
		\[
		\begin{tikzcd}
		\langle\delbar a\rangle\ar[r]&\langle\del\delbar a\rangle\\
		\langle a\rangle\ar[r]\ar[u]&\langle\del a\rangle\ar[u]
		\end{tikzcd}
		\]
		\item `dots': complexes concentrated in a single bidegree, with all differentials being zero.
		\[\langle a \rangle\]
	\end{enumerate}
\end{enumerate}

The main interest of this paper is following analogue of this property, introduced in \cite{popovici_higher-page_2020-1}.

\begin{definition}
A bounded double complex $A$ is said to have the page-$1$-$\del\delbar$-property if both its spectral sequences degenerate at page $2$ and for all $k\in\Z$, the filtrations $F$ and $\bar{F}$ on $H_{dR}^k(A)$ induce a pure Hodge structure of degree $k$.
\end{definition}
There is an obvious extension of this notion to any page of the Fr\"olicher spectral sequence, including the usual $\del\delbar$-property as the page-$0$-$\del\delbar$-property, but we will not need this here.

Define $h^{p,q}_{\#}(A)=\dim H^{p,q}_{\#}(A)$ and $h^{r}_{\#}(A)=\sum_{r=p+q} h^{p,q}_{\#}(A)$ for each $\#=\del, \delbar, BC, A$.
\begin{prop}[\cite{popovici_higher-page_2020-1}]\label{prop: page-1-ddbar property}
Let $A$ be a bounded  double complex with finite-dimensional cohomology. The following assertions are equivalent:
\begin{enumerate}
	\item  $A$ is page-$1$-$\del\delbar$:
	\item Every class in $E_2^{p,q}(A)$ admits a $d$-closed pure-type representative and the map 
	$ E_{2}^{p,q}(A)\rightarrow H^{p,q}(A)$
	sending a class via such a representative induces a well-defined linear isomorphism
	\[\bigoplus E_{2}^{p,q}(A)\rightarrow H^\Cdot_{dR} (A)\]
	and analogously for $\bar{E}_2$.	
	\item for every $r\in \Z$ the equality $h^{r}_{A}(A)+h^{r}_{BC}(A)=h^{r}_{\delbar}(A)+h^{r}_{\del}(A)$ holds.
	
	\item $A$ is a direct sum of squares, dots and `lines', i.e. complexes generated by a single pure-bidegree generator $a$, s.t. exactly one of $\del a$ or $\delbar a$ is nonzero.
	\[
	\begin{tikzcd}
	\langle \delbar a \rangle\\
	\langle a \rangle \ar[u]
	\end{tikzcd} \qquad	\begin{tikzcd}\langle a\rangle\ar[r]&\langle \del a \rangle	\end{tikzcd}
	\] 
\end{enumerate}
\end{prop}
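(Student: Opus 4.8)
The natural strategy is to reduce everything to the classification of indecomposable double complexes. Recall from \cite{stelzig_structure_2018} that every bounded double complex with finite-dimensional cohomology is isomorphic to a direct sum of indecomposable pieces, each of which is either a \emph{square} or a \emph{zigzag} (a connected staircase of pure-bidegree generators joined by single $\del$- or $\delbar$-arrows, of some length $\ell\geq 1$); the dots and lines appearing in (4) are precisely the zigzags of length $1$ and $2$. Since each of the functors $E_r$, $\bar E_r$, $H_{BC}$, $H_A$, $H_{dR}$, together with the filtrations $F,\bar F$, commutes with finite direct sums and is invariant under isomorphism of double complexes, all the numbers $h^r_{\#}(A)$, the degeneration behaviour of the two spectral sequences, and the purity of the induced filtrations can be read off summand by summand. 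The plan is therefore to tabulate the contribution of each indecomposable to every invariant in the statement, and then to match the four conditions against this table.

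First I would carry out the per-indecomposable computation. A square is acyclic for all of $H_{\delbar}, H_{\del}, H_{BC}, H_A$ and $H_{dR}$, hence contributes $0$ everywhere. A dot contributes $1$ to each of the five in a single total degree. A line contributes to $H_{\delbar}$ (resp.\ $H_{\del}$) in two consecutive total degrees and to exactly one of $H_{BC}, H_A$ in each of those degrees, while contributing $0$ to $H_{dR}$; a direct check shows it satisfies $h^r_A+h^r_{BC}=h^r_{\delbar}+h^r_{\del}$ in every degree. The key computation is that \emph{every} zigzag of length $\geq 3$ fails this equality: for an arbitrary indecomposable the inequality $h^r_A(A)+h^r_{BC}(A)\geq h^r_{\delbar}(A)+h^r_{\del}(A)$ holds in each total degree $r$, with equality in all degrees exactly for squares, dots and lines (for a longer zigzag there is always a degree where the left-hand side strictly exceeds the right). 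Summing over summands simultaneously yields the general inequality and the equivalence (3) $\Leftrightarrow$ (4).

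Next I would treat (1) $\Leftrightarrow$ (4), where the crucial bookkeeping is the correspondence between the length of a zigzag and its homological rôle: a zigzag of even length $2n$ supports a nonzero $d_n$ in one of the spectral sequences, whereas an odd zigzag of length $2n+1$ survives to $E_\infty$ but, for $n\geq 1$, contributes a de Rham class genuinely mixing two bidegrees (with no pure-type representative), breaking purity. Consequently degeneration of both spectral sequences at page $2$ forbids even zigzags of length $\geq 4$, and purity of the Hodge filtration forbids odd zigzags of length $\geq 3$, while squares, dots and lines are compatible with both; thus $A$ is page-$1$-$\del\delbar$ iff only squares, dots and lines occur, which is (4). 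For (1) $\Leftrightarrow$ (2) I would unwind the definition: when both spectral sequences degenerate at page $2$ one has $E_2^{p,q}(A)\cong\operatorname{gr}_F^p H_{dR}^{p+q}(A)$, and purity upgrades this to $H_{dR}^k(A)=\bigoplus_{p+q=k}H^{p,q}(A)$ with $H^{p,q}(A)\cong E_2^{p,q}(A)$; the map in (2) is then the inverse of this isomorphism, and the existence of a $d$-closed pure-type representative for each $E_2$-class is verified on the now-restricted list of indecomposables.

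The main obstacle I anticipate is the bookkeeping in (1) $\Leftrightarrow$ (4): one must resist equating page-$2$-degeneration with (4), since odd zigzags of length $\geq 3$ are invisible to the degeneration of the Fr\"olicher spectral sequence (they may already collapse at page $1$) yet still destroy purity. Getting the two independent constraints exactly right, and checking that the per-zigzag inequality of the second step is strict in at least one degree for each forbidden type, is where the real care lies. The representative argument for (2) is the other delicate point, since replacing $E_1=H_{\delbar}$ by $E_2$ means a priori correcting a class past a possibly nonzero $d_1$; but this becomes routine once (4) has restricted the possible summands.
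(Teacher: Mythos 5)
The paper itself gives no proof of this proposition --- it is quoted from \cite{popovici_higher-page_2020-1} --- and your route is in substance the route of that source: invoke the decomposition of a bounded double complex into squares and zigzags from \cite{stelzig_structure_2018}, note that all the invariants in sight are additive under direct sums, tabulate per indecomposable, and match the four conditions against the table. Your table entries check out where you state them: squares are acyclic for all five cohomologies, dots contribute $1$ everywhere in one degree, lines satisfy $h^r_A+h^r_{BC}=h^r_{\delbar}+h^r_{\del}$ in both relevant degrees, longer zigzags violate it strictly in some degree (e.g.\ the length-$3$ zigzag $a \xrightarrow{\del} c \xleftarrow{\delbar} b$ has $h_A+h_{BC}=1$ against $h_{\delbar}+h_{\del}=0$ in total degree $|c|$), and even zigzags of length $2n$ carry the nonzero $d_n$, so $E_2$-degeneration of both sequences excludes exactly the even ones of length $\geq 4$. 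This yields (3)$\Leftrightarrow$(4) and the degeneration half of (1)$\Leftrightarrow$(4) as you describe.

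There is, however, one concrete flaw in the purity step, and it sits exactly where you said the care lies. You claim every odd zigzag of length $\geq 3$ contributes a de Rham class ``with no pure-type representative.'' That is true only for the source-heavy orientation (more bottom corners than top, as in $a \xrightarrow{\del} c \xleftarrow{\delbar} b$, whose surviving class $[a-b]$ indeed mixes bidegrees). For the sink-heavy orientation, e.g.\ $\delbar a \xleftarrow{\delbar} a \xrightarrow{\del} \del a$, the unique surviving class \emph{does} admit $d$-closed pure-type representatives --- two of them, of different bidegrees, since $[\del a]=-[\delbar a]$ in $H_{dR}$. Here purity fails not through surjectivity of $\bigoplus_{p+q=k} H^{p,q}(A)\to H^k_{dR}(A)$ but through injectivity: $H^{p+1,q}(A)$ and $H^{p,q+1}(A)$ coincide as subspaces of $H^{p+q+1}_{dR}(A)$, so the sum is not direct (equivalently, $F^{p+1}\cap \bar F^{q+1}$ is nonzero in degree $p+q+1$, violating $k$-opposedness). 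If you run your plan as written, you would find pure representatives for all classes on such a summand and erroneously conclude it is compatible with purity, breaking the implication (1)$\Rightarrow$(4). The fix is easy --- check both halves of the definition of purity (spanning \emph{and} directness) per zigzag orientation --- but it must be done; the same caveat propagates to your verification of (2), where well-definedness of the map $E_2^{p,q}(A)\to H^{p,q}(A)$ is precisely what fails on sink-heavy odd zigzags.
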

There is also a characterisation in terms of suitable exactness properties (c.f. \cite{popovici_higher-page_2020-2} for details). 
\begin{rem}\label{rem: page-1-ddbar}
Note that for complexes with real structure property $2$ implies the symmetry $\dim E_2^{p,q}=\dim E_2^{q,p}$. 
\end{rem}

Let $X$ be a complex manifold and  $A_X=(A_X^{\Cdot,\Cdot},\del,\delbar)$ the Dolbeault double complex, i.e. the double complex of $\C$-valued differential forms. 
We denote $E_{r}^{p,q}(X)=E_{r}^{p,q}(A_{X})$ and call it the Fr\"olicher spectral sequence.
In this case the total cohomology is the de Rham cohomology $H^{\Cdot}_{dR}(X,\C)$ of $X$, the $\delbar$-cohomology $H^{p,q}_{\delbar}(A_{X})$ is the Dolbeault cohomology of $X$.
We denote $H^{p,q}_{\#}(X)=H^{p,q}_{\#}(A_{X})$, $h^{p,q}_{\#}(X)=h^{p,q}_{\#}(A_X)$ and $h^{r}_{\#}(X)=h^{r}_{\#}(A_X)$ for each 
 $\#=\del, \delbar, BC, A$.

\begin{definition}\label{def: page-1-ddbar}
Let $X$ be a compact complex manifold. 
$X$ is called a page-$1$-$\del\delbar$-manifold if the Dolbeault double complex
$A_X$ is page-$1$-$\del\delbar$.
\end{definition}

Proposition \ref{prop: page-1-ddbar property} and \cite[Cor. 13]{stelzig_structure_2018} (c.f. also \cite{angella_cohomological_2013}, \cite{angella_bott-chern_2017}, which treat all cases relevant for us) imply the following.

\begin{cor}\label{cor: page-1-ddbar property}
Let $X$ be a compact complex manifold. Given any double complex with $C$ and a map $C\rightarrow A_X$ of double complexes, such that for all $p,q\in\Z$ the induced map $H^{p,q}_{\#}(C)\rightarrow H^{p,q}_{\#}(X)$ is an isomorphism for $\#=\delbar,\del$, then it is also an isomorphism for each $\#=BC, A$. Moreover, if $C$ has the page-$1$-$\del\delbar$-property, then also $X$ is a page-$1$-$\del\delbar$-manifold.
\end{cor}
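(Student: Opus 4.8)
The plan is to separate the statement into its two claims and to reduce both to the structure theory of bounded double complexes.

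For the first claim, that the isomorphisms on $H_{\delbar}$ and $H_{\del}$ propagate to $H_{BC}$ and $H_A$, I would invoke \cite[Cor.\ 13]{stelzig_structure_2018} directly. First note that $C$ has finite-dimensional cohomology: the hypothesised isomorphisms $H^{p,q}_{\delbar}(C)\cong H^{p,q}_{\delbar}(X)$ identify it with that of the compact manifold $X$, so that the structure theory applies to $C$ as well. The mechanism underlying the citation, which I would recall for orientation, is the decomposition of $C$ and of $A_X$ into indecomposable summands, each of which is either a square or a zigzag (dots and lines being the shortest zigzags): the dimensions of all of $H_{\delbar}$, $H_{\del}$, $H_{BC}$, $H_A$, $H_{dR}$ and of the pages $E_r^{p,q}$, $\bar E_r^{p,q}$ are governed by the multiplicities of the non-square summands, the squares being invisible to each of these functors. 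A morphism inducing isomorphisms on both $H_{\delbar}$ and $H_{\del}$ matches the non-square content of the two complexes and restricts to an isomorphism of those parts; it is therefore forced to be an isomorphism on $H_{BC}$ and $H_A$ (and indeed on every $E_r$, $\bar E_r$ and on $H_{dR}$). This settles the first assertion.

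For the second claim I would argue purely numerically, using Proposition \ref{prop: page-1-ddbar property}. Combining the hypothesis with the first claim, the map $f\colon C\to A_X$ induces isomorphisms $H^{p,q}_{\#}(C)\cong H^{p,q}_{\#}(X)$ for every $\#\in\{\delbar,\del,BC,A\}$ and all $p,q$; summing over $p+q=r$ gives $h^{r}_{\#}(C)=h^{r}_{\#}(X)$ for all $r$ and all four decorations. Since $C$ is page-$1$-$\del\delbar$, the implication $(1)\Rightarrow(3)$ of Proposition \ref{prop: page-1-ddbar property} gives $h^{r}_{A}(C)+h^{r}_{BC}(C)=h^{r}_{\delbar}(C)+h^{r}_{\del}(C)$ for every $r$. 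Transporting this identity through the equalities of dimensions yields $h^{r}_{A}(X)+h^{r}_{BC}(X)=h^{r}_{\delbar}(X)+h^{r}_{\del}(X)$ for every $r$, and the reverse implication $(3)\Rightarrow(1)$ of Proposition \ref{prop: page-1-ddbar property} then shows that $A_X$ is page-$1$-$\del\delbar$, i.e.\ that $X$ is a page-$1$-$\del\delbar$-manifold.

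The only substantial step is the first one, namely upgrading $\delbar$- and $\del$-isomorphisms to $BC$- and $A$-isomorphisms; all the real content there lies in the structure theory of \cite{stelzig_structure_2018}, and this is where I would expect the main difficulty (together with the bookkeeping check that $C$ falls within the scope of that theory) to concentrate. Once this input is granted, the passage to the page-$1$-$\del\delbar$-property is a formal consequence of the dimension criterion of Proposition \ref{prop: page-1-ddbar property}, requiring no further geometric input, so the write-up should reduce to a citation for the first step and a short dimension count for the second.
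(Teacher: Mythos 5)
Your proposal is correct and matches the paper's own (one-line) derivation: the paper likewise obtains the $H_{BC}$- and $H_A$-isomorphisms from \cite[Cor.~13]{stelzig_structure_2018} and then transfers the page-$1$-$\del\delbar$-property via Proposition~\ref{prop: page-1-ddbar property}, exactly as in your dimension count using $(1)\Leftrightarrow(3)$. Your added check that $C$ has finite-dimensional cohomology (via the hypothesised $\delbar$-isomorphism and compactness of $X$) is a sensible bookkeeping point the paper leaves implicit.
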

If $C$ is equipped with a real structure and $\varphi$ respects this map, it is sufficient to consider only the induced map in $H_{\delbar}$.

The following observation gives rise to many examples of page-$1$-$\del\delbar$-complexes.
\begin{lem}[{\rm \cite[Lemma 4.6.]{popovici_higher-page_2020-1}}]\label{lem: tensor product p-1-ddbar}
If $A=(A^\Cdot,d_{1})$, $B=(B^\Cdot,d_{2})$ are (simple) complexes, then the double complex $A\otimes B=(A^\Cdot\otimes B^\Cdot, d_{1}\otimes {\rm Id}_{B},{\rm Id}_{A}\otimes d_{2})$ is page-$1$-$\partial\bar\partial$. The spaces $E_2^{p,q}(A\otimes B)=H^{p,q}(A\otimes B)$ are identified with $H^p(A)\otimes H^q(B)$.
\end{lem}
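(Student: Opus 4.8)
The plan is to reduce everything to the structure theorem, i.e. characterisation (4) of Proposition \ref{prop: page-1-ddbar property}: it suffices to exhibit $A\otimes B$ as a direct sum of squares, lines and dots. The input is the elementary fact that every bounded complex of vector spaces over a field splits, up to isomorphism, as a direct sum of one-dimensional complexes with zero differential (\emph{atoms} carrying the cohomology) and two-term acyclic complexes $\langle u\rangle\xrightarrow{\sim}\langle d u\rangle$. Concretely, I would choose in $A$ a space of cocycles mapping isomorphically onto $H^\bullet(A)$ together with a complement to $\ker d_1$ inside each $A^p$; this yields $A\cong\left(\bigoplus_i\langle a_i\rangle\right)\oplus\left(\bigoplus_j(\langle u_j\rangle\to\langle d_1 u_j\rangle)\right)$, and analogously $B\cong\left(\bigoplus_k\langle b_k\rangle\right)\oplus\left(\bigoplus_\ell(\langle v_\ell\rangle\to\langle d_2 v_\ell\rangle)\right)$.

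Since $-\otimes-$ is additive in each variable, these decompositions distribute, so $A\otimes B$ becomes the direct sum of all pairwise tensor products of the summands above, each a sub-double-complex on which $\partial=d_1\otimes\Id$ and $\bar\partial=\Id\otimes d_2$ restrict. I would then match each of the four resulting types against the three building blocks. A product $\langle a_i\rangle\otimes\langle b_k\rangle$ has both differentials zero, hence is a dot. A product $\langle a_i\rangle\otimes(\langle v_\ell\rangle\to\langle d_2 v_\ell\rangle)$ has $\partial=0$ (because $d_1 a_i=0$) and a single nonzero $\bar\partial$, hence is a vertical line; symmetrically the mixed product in the other order is a horizontal line. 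Finally $(\langle u_j\rangle\to\langle d_1 u_j\rangle)\otimes(\langle v_\ell\rangle\to\langle d_2 v_\ell\rangle)$ is spanned by $u_j\otimes v_\ell$, $d_1 u_j\otimes v_\ell$, $u_j\otimes d_2 v_\ell$ and $d_1 u_j\otimes d_2 v_\ell$, with $\partial\bar\partial(u_j\otimes v_\ell)=d_1 u_j\otimes d_2 v_\ell\neq 0$: this is exactly a square. By Proposition \ref{prop: page-1-ddbar property}(4), $A\otimes B$ is therefore page-$1$-$\del\delbar$.

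For the cohomological identification I would observe that among the three blocks only the dots contribute to $E_2$ and to $H^{p,q}$: each square is already $\bar\partial$-acyclic, while each line survives to $E_1$ but is killed by the first-page differential $d_1$, so neither contributes to $E_2^{p,q}(A\otimes B)$ or to the total cohomology. Hence $E_2^{p,q}(A\otimes B)$ is spanned by the classes $[a_i\otimes b_k]$ with $a_i$ in degree $p$ and $b_k$ in degree $q$, which is precisely $H^p(A)\otimes H^q(B)$; the equality $E_2^{p,q}=H^{p,q}$ is part of the page-$1$-$\del\delbar$-property via characterisation (2) of Proposition \ref{prop: page-1-ddbar property}.

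The only genuinely delicate point is the first one: producing the splittings of $A$ and $B$ into atoms and acyclic two-term pieces and checking that the chosen isomorphisms are compatible with tensoring, so that the images in $A\otimes B$ really are honest square, line and dot subcomplexes sitting in the correct bidegrees. Everything after that is the bookkeeping of the four cases; once the decomposition is in hand, invoking the structure theorem makes the page-$1$-$\del\delbar$-property immediate, with no spectral-sequence computation required.
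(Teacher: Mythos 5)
Your proof is correct and takes essentially the route of the cited source: the paper itself does not reprove this lemma but refers to \cite[Lemma 4.6]{popovici_higher-page_2020-1}, where the argument is exactly yours --- split each simple complex into one-dimensional ``dots'' and two-term acyclic pieces (which is the standard, unproblematic splitting you flag as the delicate point), distribute the tensor product, and identify the four kinds of summands with dots, lines and squares, so that Proposition \ref{prop: page-1-ddbar property}(4) applies and only the dot-by-dot summands, i.e. $H^p(A)\otimes H^q(B)$, survive to $E_2^{p,q}=H^{p,q}$. One small imprecision in your bookkeeping: a \emph{vertical} line (the one with $\delbar\neq 0$) is itself a $\delbar$-acyclic column and hence already vanishes on $E_1$ of the column-filtration spectral sequence; only the \emph{horizontal} lines survive to $E_1$ and are killed there by $d_1=[\del]$ --- but since in either case nothing reaches $E_2$, your conclusion is unaffected.
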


\subsection{Complex Lie Groups and Lattices}
Let $G$ be a complex Lie group and $\fg$ its Lie algebra. The complex structure of $G$ induces a splitting $\fg_\C=\fg_{1,0}\oplus\fg_{0,1}$ into $i$ and $-i$ Eigenspaces and we denote by $\fg_\C^\vee=\fg^{1,0}\oplus\fg^{0,1}$ the splitting of the dual space. Denote by $\Lambda_\fg:=\Lambda^\Cdot\fg_\C^\vee\subseteq A_G$ the space of left invariant forms on $G$. The restriction of the exterior differential makes it into a sub-double complex of $A_G$. Denote by $\Lambda_\fg^+=(\Lambda^\Cdot \fg^{1,0},\del)$ and $\Lambda_\fg^-=(\Lambda^\Cdot\fg^{0,1},\delbar)$ the simple complexes of left-invariant forms of types $(\Cdot,0)$ and $(0, \Cdot)$. The following observation was made in \cite{popovici_higher-page_2020-1}, (formulated there for $G$ nilpotent):
\begin{lem}\label{lem: li-forms p-1-ddbar}
	The complex $\Lambda_\fg$ is the tensor product of the simple complexes $\Lambda_\fg^+$ and $\Lambda_\fg^-$, hence it is a page-$1$-$\del\delbar$-complex.
\end{lem}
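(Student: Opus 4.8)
The plan is to reduce the statement to Lemma \ref{lem: tensor product p-1-ddbar} by exhibiting $\Lambda_\fg$ as the tensor product of the two simple complexes $\Lambda_\fg^+$ and $\Lambda_\fg^-$. Since on left-invariant forms the exterior differential is the Chevalley--Eilenberg differential, entirely governed by the bracket on $\fg_\C$ via $d\alpha(X,Y)=-\alpha([X,Y])$ for a $1$-form $\alpha$, the whole statement will follow once I understand how this bracket interacts with the splitting $\fg_\C=\fg_{1,0}\oplus\fg_{0,1}$.

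The key input is the structural consequence of $G$ being a \emph{complex} Lie group, and not merely a real Lie group carrying an invariant complex structure: the bracket respects the eigenspace decomposition, so that $[\fg_{1,0},\fg_{1,0}]\subseteq\fg_{1,0}$, $[\fg_{0,1},\fg_{0,1}]\subseteq\fg_{0,1}$, and crucially $[\fg_{1,0},\fg_{0,1}]=0$. Indeed, $\fg_{1,0}$ is just the holomorphic Lie algebra of $G$ with its $\C$-bilinear bracket and $\fg_{0,1}$ is its complex conjugate. Translating to the dual picture, $\fg^{1,0}$ annihilates $\fg_{0,1}$ and $\fg^{0,1}$ annihilates $\fg_{1,0}$, so for $\alpha\in\fg^{1,0}$ the expression $\alpha([X,Y])$ is nonzero only when $[X,Y]$ has a component in $\fg_{1,0}$, which by the bracket relations forces $X,Y\in\fg_{1,0}$. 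Hence $d\alpha\in\Lambda^2\fg^{1,0}$, that is $d\alpha=\del\alpha$ and $\delbar\alpha=0$; symmetrically $d\beta=\delbar\beta\in\Lambda^2\fg^{0,1}$ for $\beta\in\fg^{0,1}$.

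Using that $d$ is a graded derivation, I would then propagate this from generators to all of $\Lambda^\Cdot\fg_\C^\vee$. Under the identification of bigraded algebras $\Lambda^\Cdot\fg_\C^\vee=\Lambda^\Cdot\fg^{1,0}\otimes\Lambda^\Cdot\fg^{0,1}$, for a monomial $\omega\wedge\eta$ with $\omega\in\Lambda^p\fg^{1,0}$ and $\eta\in\Lambda^q\fg^{0,1}$ one gets $\del(\omega\wedge\eta)=\del\omega\wedge\eta$ and $\delbar(\omega\wedge\eta)=(-1)^p\,\omega\wedge\delbar\eta$, the sign being exactly the Koszul sign built into the tensor product of complexes. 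This is precisely the assertion $\Lambda_\fg\cong\Lambda_\fg^+\otimes\Lambda_\fg^-$ as double complexes, with $\del=d_{\Lambda_\fg^+}\otimes\Id$ and $\delbar=\Id\otimes d_{\Lambda_\fg^-}$. Lemma \ref{lem: tensor product p-1-ddbar} then delivers the page-$1$-$\del\delbar$-property, together with the identification $E_2^{p,q}(\Lambda_\fg)=H^p(\Lambda_\fg^+)\otimes H^q(\Lambda_\fg^-)$. I expect the only delicate point to be the very first step, namely verifying $[\fg_{1,0},\fg_{0,1}]=0$ and that each summand is a subalgebra; this is the sole place where the hypothesis that $G$ is a genuine complex Lie group is used, and everything else is then formal.
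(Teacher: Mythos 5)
Your proposal is correct and follows essentially the same route as the paper, which records the lemma as an observation from \cite{popovici_higher-page_2020-1}: the whole content is the identification $\Lambda_\fg\cong\Lambda_\fg^+\otimes\Lambda_\fg^-$ as double complexes, after which Lemma \ref{lem: tensor product p-1-ddbar} applies. Your verification that $\C$-bilinearity of the bracket forces $[\fg_{1,0},\fg_{0,1}]=0$, so that $d=\del$ on $\fg^{1,0}$ and $d=\delbar$ on $\fg^{0,1}$, with the Koszul sign handled by the derivation property, is exactly the computation the paper leaves implicit.
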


Now assume that there exists a lattice $\Gamma\subseteq G$ (which, for the purpose of this article, will mean that $\Gamma$ is a discrete, cocompact subgroup). We will write $X:=\Gamma\backslash G$ for the compact complex manifold obtained as the quotient of $G$ by the left action of $\Gamma$. Since $\Lambda_\fg$ consists of left-invariant forms, it can also be considered as a sub-double complex of $A_X$. It carries the induced real structure. Hence, by Lemma \ref{lem: li-forms p-1-ddbar} and  Corollary \ref{cor: page-1-ddbar property} we have:
\begin{prop}\label{prop: lie-1deldelbar}
If the inclusion $i:\Lambda_\fg\to  A_X$ induces a cohomology isomorphism $H^{p,q}_{\delbar}(\Lambda_\fg)\cong H^{p,q}_{\delbar}(X)$, then $X$ is  a page-$1$-$\del\delbar$-manifold.
\end{prop}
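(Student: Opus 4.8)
The plan is to prove Proposition~\ref{prop: lie-1deldelbar} by invoking the machinery already assembled in the preliminaries, so that the statement reduces to checking the hypotheses of Corollary~\ref{cor: page-1-ddbar property}.

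First I would observe that the left-invariant forms $\Lambda_\fg$ form a sub-double complex of the Dolbeault complex $A_X$, and that the inclusion $i\colon \Lambda_\fg\to A_X$ is a morphism of double complexes. Crucially, $\Lambda_\fg$ carries a real structure (the antilinear involution complex-conjugating forms, which exchanges bidegrees $(p,q)$ and $(q,p)$), and the inclusion $i$ respects this real structure since it is just the inclusion of genuine complex-valued forms. This is exactly the situation addressed by the final sentence after Corollary~\ref{cor: page-1-ddbar property}: when the map respects the real structure, it suffices to check that the induced map in $H_{\delbar}$ is an isomorphism, and the isomorphism for $H_{\del}$ (and then for $H_{BC}$, $H_A$) follows automatically.

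Next I would feed in the two ingredients. By Lemma~\ref{lem: li-forms p-1-ddbar}, the complex $\Lambda_\fg$ is a page-$1$-$\del\delbar$-complex, being the tensor product $\Lambda_\fg^+\otimes\Lambda_\fg^-$ of two simple complexes (Lemma~\ref{lem: tensor product p-1-ddbar}). So the role of $C$ in Corollary~\ref{cor: page-1-ddbar property} is played by $\Lambda_\fg$: it already has the page-$1$-$\del\delbar$-property. The remaining hypothesis of the corollary is that the induced map on $\#$-cohomology be an isomorphism for $\#=\delbar,\del$; but by the hypothesis of the proposition, the map $H^{p,q}_{\delbar}(\Lambda_\fg)\to H^{p,q}_{\delbar}(X)$ is an isomorphism for all $p,q$, and by the real-structure remark this upgrades for free to an isomorphism in $H_{\del}$ as well.

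With both hypotheses verified, the conclusion of Corollary~\ref{cor: page-1-ddbar property} applies directly: since $\Lambda_\fg$ has the page-$1$-$\del\delbar$-property and the comparison map is a cohomology isomorphism of the required type, $X$ is a page-$1$-$\del\delbar$-manifold. I do not anticipate a genuine obstacle here, since this proposition is essentially a formal assembly of earlier results; the only point requiring a moment of care is confirming that $i$ genuinely respects the real structure so that the $\delbar$-isomorphism hypothesis alone suffices, rather than having to separately verify the $\del$-isomorphism. All the real analytic content—actually producing a lattice $\Gamma$ for which $H^{p,q}_{\delbar}(\Lambda_\fg)\cong H^{p,q}_{\delbar}(X)$ holds—is deferred to the later sections, as this proposition only provides the reduction.
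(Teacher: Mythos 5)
Your proposal is correct and follows exactly the paper's own argument: the paper likewise notes that $\Lambda_\fg$ carries the induced real structure, invokes Lemma~\ref{lem: li-forms p-1-ddbar} to see that $\Lambda_\fg$ is page-$1$-$\del\delbar$, and applies Corollary~\ref{cor: page-1-ddbar property} together with the remark that a real-structure-compatible map only needs the $H_{\delbar}$ isomorphism. Nothing is missing.
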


In \cite{sakane_compact_1976}, Sakane proved that if $G$ is nilpotent, then the inclusion $i:\Lambda_\fg\to   A_X$ induces a cohomology isomorphism $H^{p,q}_{\delbar}(\Lambda_\fg)\cong H^{p,q}_{\delbar}(X)$.
Hence, if $G$ is nilpotent, then  $X$ is a page-$1$-$\del\delbar$-manifold.

If $G$ is not nilpotent  the inclusion  $i:\Lambda_\fg\to  A_X$ does not  induce an isomorphism $H^{p,q}_{\delbar}(\Lambda_\fg)\cong H^{p,q}_{\delbar}(X)$ (e.g. Nakamura manifolds).
Since it is known that Lie-groups admitting a lattice are unimodular \cite[Lem. 6.2]{milnor_curvatures_1976}, by averaging  associated with a bi-invariant Haar measure,  we  obtain a map of double complexes $\mu:  A_{X}^{\ast}\to \Lambda_{\fg}$ such that $\mu\circ i={\rm Id}$ (see the proof of  \cite[Theorem 7]{belgun_2000}). Thus, one obtains a direct sum decomposition $A_X=\Lambda_{\fg}\oplus A_X/\Lambda_{\fg}$ and a corresponding decomposition on all cohomologies considered. In particular, the cohomology groups of $\Lambda_\fg$ inject into those of $X$.
\section{Solvable Case}\label{sec: solv}
We keep the notation of the preceding  section and assume that $G$ is a complex solvable group. 
Let $N$ be the nilradical of $G$.
We can take a connected simply-connected complex nilpotent subgroup $C\subseteq G$  such that $G=C\cdot N$.
Associated with $C$, we define the diagonalizable representation 
\[G=C\cdot N\ni c\cdot n \mapsto ({\rm Ad}_{c})_{s} \in{\rm Aut}(\fg_{1,0})
\] 
 where $({\rm Ad}_{c})_{s}$ is the semi-simple part of the Jordan decomposition of  the adjoint operator $({\rm Ad}_{c}) \in{\rm Aut}(\fg_{1,0})$.
 Denote this representation  by ${\rm Ad}_{s}:G\to {\rm Aut}(\fg_{1,0})$.

We have a basis $ X_{1},\dots,X_{n} $ of $\fg_{1,0}$ such that,
\[ {\rm Ad}_{s}(g) \;=\; {\rm diag} \left(\alpha_{1}(g),\dots,\alpha_{n}(g) \right) \]
for some characters $\alpha_1,\dots,\alpha_n$ of $G$.
Let $B^{\Cdot}_{\Gamma}$ be defined as
\begin{equation*}\label{eq:def-b-holpar}
B^{\Cdot}_{\Gamma} \;:=\; \left\langle \frac{\bar\alpha_{I}}{\alpha_{I} }\, \bar x_{I} \;\middle\vert\; I \subseteq \{1,\ldots,n\} \text{ such that } \left.\left(\frac{\bar\alpha_{I}}{\alpha_{I}}\right)\right\lfloor_{\Gamma}=1 \right\rangle \;, 
\end{equation*}
(where we shorten, {\itshape e.g.} $\alpha_I:=\alpha_{i_1}\cdot\cdots\cdot\alpha_{i_{k}}$ for a multi-index $I=\left(i_1,\dots,i_k\right)$).
Then the inclusion $B^{\Cdot}_{\Gamma}\subset A_{X}^{0,\Cdot}$ induces an isomorphism $H^{\Cdot}(B^{\Cdot}_{\Gamma})\cong H^{0,\Cdot}(X)$ (\cite{kasuya_MRL_2014}).
Consider the following subcomplex of $A_X$:
\begin{equation*}\label{dcplx}
C^{\Cdot_1,\Cdot_2} \;:=\; \Lambda^{\Cdot_1} \fg^{1,0} \otimes_\C B^{\Cdot_2}_{\Gamma} + \overline{ B^{\Cdot_1}_{\Gamma} } \otimes_\C\Lambda^{\Cdot_2} \fg^{0,1} \;.
\end{equation*}
Then $C^{\Cdot_1,\Cdot_2}$ is a double complex with real structure and the inclusion $C^{\Cdot,\Cdot}_{\Gamma}\subset A_{X}^{\Cdot,\Cdot}$ induces an isomorphism
$H^{p,q}_{\delbar}(C^{\Cdot_1,\Cdot_2}_{\Gamma})\cong H^{p,q}_{\delbar}(X)$ (\cite[Section 2.6]{angella_bott-chern_2017}). By Corollary \ref{cor: page-1-ddbar property}), it suffices to show that $C^{\Cdot_1,\Cdot_2}$ is page-$1$-$\del\delbar$ in order to prove Theorem \ref{thm: solv}.

Take the weight decomposition  $\Lambda^{\Cdot} \fg^{1,0}=\bigoplus_{\lambda} V_{\lambda}$ for the representation ${\rm Ad}_{s}:G\to {\rm Aut}(\fg_{1,0})$.
We have $\Lambda^{\Cdot} \fg^{0,1}=\bigoplus_{\lambda} V_{\bar\lambda}$.
(We must distinguish $V_{\bar{1}}\subset \Lambda^{\Cdot} \fg^{0,1}$ from  $ V_{1}\subset \Lambda^{\Cdot} \fg^{1,0}$ for the trivial representation $1$).
Then we can write
\[ \Lambda^{\Cdot_1} \fg^{1,0} \otimes_\C B^{\Cdot_2}_{\Gamma}=\bigoplus_{\lambda}V_{\lambda}\otimes \left(\bigoplus_{\left.\left(\frac{\alpha}{\bar\alpha}\right)\right\lfloor_{\Gamma}=1}\frac{\alpha}{\bar\alpha} V_{\bar\alpha}\right)
\]
and 
\[\overline{ B^{\Cdot_1}_{\Gamma} } \otimes_\C\Lambda^{\Cdot_2} \fg^{0,1}=\bigoplus_{\lambda}\left(\bigoplus_{\left.\left(\frac{\alpha}{\bar\alpha}\right)\right\lfloor_{\Gamma}=1}\frac{\bar\alpha}{\alpha} V_{\alpha}\right)\otimes V_{\bar\lambda}.
\]

We have
\[\Lambda^{\Cdot_1} \fg^{1,0} \otimes_\C B^{\Cdot_2}_{\Gamma}\cap \overline{ B^{\Cdot_1}_{\Gamma} } \otimes_\C\Lambda^{\Cdot_2} \fg^{0,1}=\bigoplus_{\left.\left(\frac{\alpha}{\bar\alpha}\right)\right\lfloor_{\Gamma}=1} \alpha V_{\frac{1}{\alpha}}\otimes \frac{1}{\bar\alpha} V_{\bar\alpha}
\]
and hence 
\[C^{\Cdot_1,\Cdot_2}=\bigoplus_{\left.\left(\frac{\alpha}{\bar\alpha}\right)\right\lfloor_{\Gamma}=1}\left\{ \bigoplus_{\lambda\not=\frac{1}{\alpha}}\alpha V_{\lambda}\otimes \frac{1}{\bar\alpha} V_{\bar\alpha}\oplus \alpha V_{\frac{1}{\alpha}}\otimes \frac{1}{\bar\alpha} V_{\bar\alpha}\oplus \bigoplus_{\lambda\not=\frac{1}{\alpha}} \frac{1}{\alpha}  V_{\alpha}\otimes \bar\alpha V_{\bar\lambda}  \right\}.\]
By ${\rm Ad}_{s}(g) \in {\rm Aut}(\fg_{1,0})$, we have $d=\del : V_{\lambda}\to V_{\lambda}$ and also $d=\delbar : V_{\bar \lambda}\to V_{\bar \lambda}$.
Since we have $\alpha^{-1}d\alpha \in  V_{1}$ for any holomorphic character $\alpha$, we have  $d=\del : \alpha V_{\lambda}\to  \alpha V_{\lambda}$ and  also $d=\delbar : \bar\alpha V_{\bar \lambda}\to  \bar\alpha V_{\bar \lambda}$ for any $\alpha, \lambda$.
Thus, each factor of the direct sum of $C^{\Cdot_1,\Cdot_2}$ is  the tensor product of simple complexes, hence it is a page-$1$-$\del\delbar$-complex by Lemma \ref{lem: tensor product p-1-ddbar}. This implies Theorem \ref{thm: solv}.

\subsection{Variant for Complex Solvmanifolds of Splitting Type}
In this subsection only, $G$ will not be a complex Lie group, but we assume that $G$ is the semi-direct product $\C^{n}\ltimes_{\phi}N$ so that:
\begin{enumerate}
 \item\label{item:ass-1} $N$ is a connected simply-connected $2m$-dimensional nilpotent Lie group endowed with an $N$-bi-invariant complex structure $J_N$; (denote the Lie algebras of $\C^{n}$ and $N$ by $\mathfrak{a}$ and, respectively, $\fn$;)
 \item\label{item:ass-2} for any $t\in \C^{n}$, it holds that $\phi(t)\in {\rm Aut}(N)$ is a holomorphic automorphism of $N$ with respect to $J_N$;
 \item\label{item:ass-3} $\phi$ induces a semi-simple action on $\fn$;
 \item\label{item:ass-4} $G$ has a lattice $\Gamma$; (then $\Gamma$ can be written as $\Gamma = \Gamma_{\C^n} \ltimes_{\phi} \Gamma_{N}$ such that $\Gamma_{\C^n}$ and $\Gamma_{N}$ are  lattices of $\C^{n}$ and, respectively, $N$, and, for any $t\in \Gamma_{\C^n}$, it holds $\phi(t)\left(\Gamma_N\right)\subseteq\Gamma_N$.)

\end{enumerate}
Consider the solvmanifold $X=\Gamma\backslash G$.
This satisfies \cite[Assumption 1.1]{kasuya-mathz-2013} (\cite[Assumption 2.11]{angella_bott-chern_2017}).
\begin{thm}\label{thm: splitting type}
 $X$ is a page-$1$-$\partial\bar\partial$ manifold.
\end{thm}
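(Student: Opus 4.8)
The plan is to mimic the argument of Section \ref{sec: solv} essentially verbatim, replacing the complex solvable Lie group by the present $G=\C^{n}\ltimes_{\phi}N$. Since the assumptions (\ref{item:ass-1})--(\ref{item:ass-4}) are exactly \cite[Assumption 1.1]{kasuya-mathz-2013} (equivalently \cite[Assumption 2.11]{angella_bott-chern_2017}), I may invoke the finite-dimensional sub-double complex $C^{\Cdot_1,\Cdot_2}\subseteq A_{X}$ constructed there. It carries a real structure, is built from the invariant forms $dz_{i},d\bar z_{j}$ on the abelian factor $\C^{n}$, the left-invariant forms of $N$, and character factors $\frac{\bar\alpha_{I}}{\alpha_{I}}$ whose restriction to $\Gamma$ is trivial, and the inclusion $C^{\Cdot_1,\Cdot_2}\hookrightarrow A_{X}$ induces an isomorphism $H^{p,q}_{\delbar}(C^{\Cdot_1,\Cdot_2})\cong H^{p,q}_{\delbar}(X)$. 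By Corollary \ref{cor: page-1-ddbar property}, it then suffices to prove that $C^{\Cdot_1,\Cdot_2}$ is a page-$1$-$\del\delbar$-complex.

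First I would record the two structural facts that make $C^{\Cdot_1,\Cdot_2}$ amenable to the same analysis as in the solvable case. The factor $\C^{n}$ being abelian, the forms $dz_{i}$ and $d\bar z_{j}$ are simultaneously $\del$- and $\delbar$-closed. The bi-invariance of $J_{N}$ (assumptions (\ref{item:ass-1})--(\ref{item:ass-2})) means that $N$ is a \emph{complex} nilpotent Lie group, so its Maurer--Cartan equations involve only $(1,0)$-forms; hence, as in Lemma \ref{lem: li-forms p-1-ddbar}, invariant holomorphic forms of $N$ are $\delbar$-closed with $d=\del$, while invariant antiholomorphic forms satisfy $d=\delbar$. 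Finally, by assumption (\ref{item:ass-3}) the action $\phi$ is semi-simple, so, diagonalising it, I obtain a weight decomposition $\Lambda^{\Cdot}\fn^{1,0}=\bigoplus_{\lambda}V_{\lambda}$ (and its conjugate on $\Lambda^{\Cdot}\fn^{0,1}$) together with holomorphic characters $\alpha_{1},\dots,\alpha_{m}$ of $\C^{n}$, for which $\alpha^{-1}d\alpha$ is a closed invariant $(1,0)$-form lying in the trivial weight $V_{1}$.

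With these facts in hand, the computation is identical to the displayed decomposition in Section \ref{sec: solv}: $d=\del$ preserves each twisted weight space $\alpha V_{\lambda}$ of $(\Cdot,0)$-type, $d=\delbar$ preserves each $\bar\alpha V_{\bar\lambda}$ of $(0,\Cdot)$-type, and collecting the $dz_{i},d\bar z_{j}$ together with the admissible character factors I get that $C^{\Cdot_1,\Cdot_2}$ is a direct sum, over the characters $\frac{\bar\alpha}{\alpha}$ trivial on $\Gamma$, of tensor products of a simple $\del$-complex concentrated in bidegrees $(\Cdot,0)$ with a simple $\delbar$-complex concentrated in bidegrees $(0,\Cdot)$. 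Each such tensor product is page-$1$-$\del\delbar$ by Lemma \ref{lem: tensor product p-1-ddbar}, and a direct sum of page-$1$-$\del\delbar$-complexes is again one (Proposition \ref{prop: page-1-ddbar property}, item $4$). Hence $C^{\Cdot_1,\Cdot_2}$, and therefore $X$, is page-$1$-$\del\delbar$.

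The main obstacle is the middle step: one must check that the differential of the model $C^{\Cdot_1,\Cdot_2}$ genuinely respects this tensor/weight decomposition, i.e.\ that combining the nilpotent structure constants of $N$ with the character factors still produces pure-type differentials ($\del$ on holomorphic-type factors, $\delbar$ on antiholomorphic-type ones) that preserve weights. This is exactly where assumptions (\ref{item:ass-1})--(\ref{item:ass-3}) enter --- bi-invariance forcing $N$ to be a complex Lie group, holomorphicity of $\phi$ keeping $d\alpha_{j}$ of type $(1,0)$, and semisimplicity guaranteeing weight preservation --- and where one should verify, following \cite{kasuya-mathz-2013}, that $C^{\Cdot_1,\Cdot_2}$ is $d$-stable and splits as claimed.
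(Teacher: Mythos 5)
There is a genuine gap, and it sits precisely at the point you yourself flag as ``the main obstacle'': you assume the weight characters $\alpha_1,\dots,\alpha_m\in{\rm Hom}(\C^n;\C^*)$ are \emph{holomorphic}, citing ``holomorphicity of $\phi$ keeping $d\alpha_j$ of type $(1,0)$''. Assumption (\ref{item:ass-2}) only says that for each \emph{fixed} $t\in\C^n$ the automorphism $\phi(t)$ of $N$ is holomorphic with respect to $J_N$ (which guarantees that $\phi(t)$ preserves $\fn_{1,0}$); it does \emph{not} say that $t\mapsto\phi(t)$ is holomorphic in $t$, so the $\alpha_j$ are in general only smooth characters, e.g. $\alpha_j(t)=e^{at+b\bar t}$. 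Indeed, if all $\alpha_j$ were holomorphic, $G$ would be a complex Lie group and Theorem \ref{thm: splitting type} would be subsumed by Theorem \ref{thm: solv}; the setting explicitly stipulates that $G$ is \emph{not} a complex Lie group here, so the entire content of the variant lies in the non-holomorphic case your argument excludes. Concretely, the dual basis of $\fg^{1,0}$ is $x_1,\dots,x_n,\alpha_1^{-1}y_1,\dots,\alpha_m^{-1}y_m$, and $d(\alpha_j^{-1}y_j)$ contains the term $d(\alpha_j^{-1})\wedge y_j$, whose $\C^n$-part has a nonzero $(0,1)$-component whenever $\alpha_j$ fails to be holomorphic; hence $d$ is not pure $\del$ on your would-be holomorphic-type factors, and the asserted splitting into a tensor product of a simple $\del$-complex and a simple $\delbar$-complex breaks down. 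Relatedly, the model subcomplex you describe, with character factors $\bar\alpha_I/\alpha_I$ trivial on $\Gamma$, is the one for complex parallelisable solvmanifolds (\cite[Section 2.6]{angella_bott-chern_2017}, as used in Section \ref{sec: solv}), not the one for splitting type.

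The paper's actual proof repairs exactly this defect by invoking \cite[Lemma 2.2]{kasuya-mathz-2013}: there are unique \emph{unitary} characters $\beta_j,\gamma_j$ such that $\alpha_j\beta_j^{-1}$ and $\bar\alpha_j\gamma_j^{-1}$ are holomorphic. The correct model complex (\cite[Section 2.5]{angella_bott-chern_2017}) is $C_\Gamma=B_\Gamma+\overline{B_\Gamma}$ with $B^{p,q}_\Gamma$ spanned by $x_I\wedge(\alpha_J^{-1}\beta_J)y_J\wedge\bar x_K\wedge(\bar\alpha_L^{-1}\gamma_L)\bar y_L$ subject to $(\beta_J\gamma_L)\lfloor_\Gamma=1$ --- note the triviality condition involves the unitary corrections $\beta,\gamma$, not $\bar\alpha/\alpha$. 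One then decomposes $C_\Gamma$ into weight pieces of the form $\Lambda\C^n\otimes\bigl(\alpha\beta^{-1}\bar\lambda\gamma^{-1}V^{1,0}_{\alpha}\bigr)\otimes V^{0,1}_{\bar\lambda}$ (plus the conjugates and the untwisted middle part), and the decisive verification, replacing your false premise, is that the \emph{corrected} products $\alpha\beta^{-1}\bar\lambda\gamma^{-1}$ are holomorphic, so that $d=\del$ maps the twisted $N$-factor into $\Lambda^{1,0}\C^n$ tensored with the same factor; only then is each summand a tensor product of simple complexes to which Lemma \ref{lem: tensor product p-1-ddbar} applies. Your surrounding scaffolding is sound --- reduction via Corollary \ref{cor: page-1-ddbar property} using the real structure, and the observation that bi-invariance of $J_N$ makes $N$ a complex Lie group so that $d=\del$ on $V^{1,0}_{\alpha}$ --- but the twist-by-unitary-characters mechanism is the substance of the proof, and it is missing from your proposal.
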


This result is a generalization of the observation on completely solvable Nakamura manifolds in  \cite[Corollary 3.3]{popovici_higher-page_2020-1}.
\begin{proof}
Consider the standard basis $\left\{ X_{1},\, \dots,\, X_{n} \right\}$ of $\C^n$.
Consider the decomposition $\fn\otimes_\R{\C}=\fn_{1,0}\oplus \fn_{0,1}$ induced by $J_N$.
By the condition {\ref{item:ass-2}}, this decomposition is a direct sum of $\C^{n}$-modules.
By the condition {\ref{item:ass-3}}, we have a basis $\left\{Y_{1},\, \dots,\, Y_{m}\right\}$ of $\fn_{1,0}$ and characters $\alpha_1,\ldots,\alpha_m\in{\rm Hom}(\C^n;\C^*)$ such that the induced action $\phi$ on $\fn_{1,0}$ is represented by
$$ \C^n \ni t \mapsto \phi(t) \;=\; {\rm diag} \left( \alpha_{1}(t),\, \dots,\, \alpha_{m} (t) \right) \in {\rm Aut}(\fn_{1,0}) \;. $$
For any $j\in\{1,\ldots,m\}$, since $Y_{j}$ is an $N$-left-invariant $(1,0)$-vector field on $N$,
the $(1,0)$-vector field $\alpha_{j}Y_{j}$ on $\C^{n}\ltimes _{\phi} N$ is $G$-left-invariant.
Consider the Lie algebra $\fg$ of $G$ and the decomposition $\fg_\C := \fg \otimes_\R \C = \fg_{1,0} \oplus \fg_{0,1}$ induced by $J$.
Hence we have a basis $\left\{ X_{1},\, \dots,\, X_{n},\, \alpha_{1}Y_{1},\, \dots,\, \alpha_{m}Y_{m}\right\}$ of $\fg_{1,0}$, and let $\left\{ x_{1},\, \dots,\, x_{n},\, \alpha^{-1}_{1}y_{1},\, \dots,\, \alpha_{m}^{-1}y_{m} \right\}$ be its dual basis of ${\fg^{1,0}}$.
Then we have 
$$ \Lambda^{p,q}\fg^{\*}_\C \;=\; \begin{array}{c}
\Lambda^{p} \left\langle x_{1},\, \dots,\, x_{n},\, \alpha^{-1}_{1}y_{1},\, \dots,\,\alpha^{-1}_{m}y_{m} \right\rangle \\\otimes \Lambda^{q} \left\langle \bar x_{1},\, \dots,\, \bar x_{n},\, \bar\alpha^{-1}_{1}\bar y_{1},\, \dots,\, \bar\alpha^{-1}_{m}\bar y_{m} \right\rangle\end{array} \;.$$
\medskip
For any $j\in\{1,\ldots,m\}$, there exist unique unitary characters $\beta_{j}\in {\rm Hom}(\C^n;\C^*)$ and $\gamma_{j}\in {\rm Hom}(\C^n;\C^*)$ on $\C^{n}$ such that $\alpha_{j}\beta_{j}^{-1}$ and $\bar\alpha_{j}\gamma^{-1}_{j}$ are holomorphic \cite[Lemma 2.2]{kasuya-mathz-2013}.

Define the differential bi-graded sub-algebra $B^{\Cdot,\Cdot}_{\Gamma}\subseteq A_X^{\Cdot,\Cdot}$ as
\begin{equation*}\label{eq:def-b}
B^{p,q}_{\Gamma} := \C\bigg\langle x_{I} \wedge \left(\alpha^{-1}_{J}\beta_{J}\right)\, y_{J} \wedge \bar x_{K} \wedge \left( \bar\alpha^{-1}_{L}\gamma_{L} \right) \, \bar y_{L}\bigg\rangle_{
\left\{
\substack{|I| + |J|=p \text{ and } |K| + |L| = q\\
 \text{ such that } \left( \beta_{J}\gamma_{L} \right)\lfloor_{\Gamma} = 1}
 \right\}}
 \;.
\end{equation*}
Define 
$$C_{\Gamma}^{\Cdot, \Cdot}:= B^{\Cdot,\Cdot}_{\Gamma}+\overline{B^{\Cdot,\Cdot}_{\Gamma}}.
$$
Then $C_{\Gamma}^{\Cdot, \Cdot}$ is a double complex with real structure and the inclusion $C^{\Cdot,\Cdot}_{\Gamma}\subset A_{X}^{\Cdot,\Cdot}$ induces an isomorphism
$H^{p,q}_{\delbar}(C^{\Cdot,\Cdot}_{\Gamma})\cong H^{p,q}_{\delbar}(X)$ and $H^{\Cdot,\Cdot}_{BC}(C^{\Cdot,\Cdot}_{\Gamma})\cong H^{\Cdot,\Cdot}_{BC}(X)$ (\cite[Section 2.5]{angella_bott-chern_2017}). Again, it suffices to show that $C_\Gamma^{\Cdot,\Cdot}$ is page-$1$-$\del\delbar$.

Take the weight decomposition  $\Lambda^{\Cdot} \fn^{1,0}=\bigoplus_{\alpha} V^{1,0}_{\alpha}$ and  $\Lambda^{\Cdot} \fn^{0,1}=\bigoplus_{\bar\alpha} V^{0,1}_{\bar\alpha}$  for the  $\C^{n}$-action.
Then we have 
\[ B^{\Cdot,\Cdot}_{\Gamma}=\bigoplus_{\left( \beta\gamma \right)\lfloor_{\Gamma} = 1} \Lambda^{\Cdot,\Cdot}\C^{n}\otimes (\alpha\beta^{-1} V^{1,0}_{\alpha})\otimes (\bar\lambda\gamma^{-1} V^{0,1}_{\bar\lambda})
\]
where $\beta\in {\rm Hom}(\C^n;\C^*)$ and $\gamma \in {\rm Hom}(\C^n;\C^*)$ are unitary characters on $\C^{n}$ such that $\alpha\beta^{-1}$ and $\bar\lambda\gamma^{-1}$ are holomorphic.
We have
\begin{equation*}
\begin{split}
C_{\Gamma}^{\Cdot, \Cdot}=\bigoplus_{\left( \beta\gamma \right)\lfloor_{\Gamma} = 1, \alpha\bar\lambda\not=\beta\gamma} \Lambda\C^{n}\otimes (\alpha\beta^{-1}\bar\lambda\gamma^{-1} V^{1,0}_{\alpha})\otimes  V^{0,1}_{\bar\lambda}\\
\oplus \bigoplus_{\left( \beta\gamma \right)\lfloor_{\Gamma} = 1, \alpha\bar\lambda\not=\beta\gamma} \Lambda\C^{n}\otimes  V^{1,0}_{\lambda}\otimes \left(\overline{\alpha\beta^{-1}\bar\lambda\gamma^{-1}}  V^{0,1}_{\bar\alpha}\right) \\
\oplus  \bigoplus_{\left( \alpha\bar\lambda \right)\lfloor_{\Gamma} = 1
} \Lambda \C^{n}\otimes  V^{1,0}_{\alpha}\otimes V^{0,1}_{\bar\lambda}.
\end{split}
\end{equation*}
Since $J_N$ is bi-invariant, we have $d=\del : V^{1,0}_{\alpha}\to V^{1,0}_{\alpha}$ and $d=\delbar : V^{0,1}_{\bar\alpha}\to V^{0,1}_{\bar\alpha}$.
Since $\alpha\beta^{-1}\bar\lambda\gamma^{-1} $ is holomorphic, we have \[d=\del :\alpha\beta^{-1}\bar\lambda\gamma^{-1}  V^{1,0}_{\alpha}\to \Lambda^{1,0}\C^{n}\otimes (\alpha\beta^{-1}\bar\lambda\gamma^{-1} V^{1,0}_{\alpha})\] and \[d=\delbar :\overline{\alpha\beta^{-1}\bar\lambda\gamma^{-1}}  V^{0,1}_{\bar\alpha} \to  \Lambda^{0,1}\C^{n}\otimes  (\overline{\alpha\beta^{-1}\bar\lambda\gamma^{-1}}  V^{0,1}_{\bar\alpha}).\]
Thus each  each factor of the direct sum of $C^{\Cdot,\Cdot}$ is  the tensor product of simple complexes, hence it is a page-$1$-$\del\delbar$-complex.\end{proof}

\section{Semisimple Case}\label{sec: semsim}
In this section, we keep the notation from section \ref{sec: prelim} and assume $G$ to be a complex semisimple Lie group, and $K\subseteq G$ a maximal compact subgroup. Denote by  $\fg=\Lie(G),\fk=\Lie(K)$ the respective Lie algebras. Note that we have $\fg=\fk\oplus J\fk$.\\

As before, we denote by $X:=\Gamma \backslash G$ the quotient by the left action of $\Gamma$. Later, we will also consider $Y:=G/K$ the quotient of the right action by $K$ and $Z:=\Gamma\backslash G/K$ the quotient by both. A first observation is:

\begin{lem}\label{lem: maximal compact subgroup cohomology}
	There is a canonical isomorphism $H(\fg_{1,0})=H_{dR}(K,\C)$.
\end{lem}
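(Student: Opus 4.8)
The plan is to realise both sides as cohomologies of complexes of left-invariant forms and to connect them by restricting along the inclusion $\iota\colon K\hookrightarrow G$. By definition $H(\fg_{1,0})$ is the cohomology of $\Lambda_\fg^+=(\Lambda^\Cdot\fg^{1,0},\del)$. Because $G$ is a \emph{complex} Lie group, its Maurer--Cartan form is holomorphic, so the structure equations $d\omega^k=-\tfrac12\sum c^k_{ij}\,\omega^i\wedge\omega^j$ for a basis of left-invariant holomorphic $1$-forms show that each $d\omega^k$ is of pure type $(2,0)$. Hence $\delbar$ vanishes on $\Lambda_\fg^+$ and $d=\del$ there; in other words $\Lambda_\fg^+$ is exactly the Chevalley--Eilenberg complex of the complex Lie algebra $\fg_{1,0}$, and $H(\fg_{1,0})$ is its Lie algebra cohomology.

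Next I would analyse the pullback $\iota^*$. Since $\iota$ is a homomorphism of Lie groups it carries left-invariant forms to left-invariant forms and commutes with $d$, so it restricts to a chain map $\iota^*\colon(\Lambda_\fg^+,\del)\to(\Lambda^\Cdot\fk_\C^\vee,d)$ from the invariant holomorphic forms on $G$ to the invariant $\C$-valued forms on $K$. On invariant forms $\iota^*$ is the transpose of the $\C$-linear map $\fk_\C\to\fg_{1,0}$, $Y\mapsto Y^{1,0}=\tfrac12(Y-iJY)$. The decomposition $\fg=\fk\oplus J\fk$ from the start of the section says precisely that $\fk$ is a compact real form of the complex semisimple Lie algebra $\fg\cong\fg_{1,0}$, so this map is an isomorphism of complex Lie algebras and $\iota^*$ is an isomorphism of Chevalley--Eilenberg complexes. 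In particular it induces an isomorphism $H(\fg_{1,0})\cong H^\Cdot(\fk_\C,\C)$, the latter being the complexified Lie algebra cohomology of $\fk$.

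Finally I would invoke the classical theorem of Chevalley--Eilenberg for the compact group $K$ (which is connected, being maximal compact in the connected group $G$): averaging $\C$-valued forms against the Haar measure shows that the inclusion of left-invariant forms into all forms is a quasi-isomorphism, whence $H^\Cdot(\fk_\C,\C)\cong H_{dR}(K,\C)$. Composing the two isomorphisms yields the asserted identification, which is canonical because it is built only from the pullback $\iota^*$ and Haar averaging.

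The step requiring the most care is the matching of differentials and bidegrees: one must check both that $d$ agrees with $\del$ on $\Lambda_\fg^+$ (which is where the hypothesis that $G$ is a complex Lie group enters, via holomorphicity of the Maurer--Cartan form) and that $\iota^*$ sends this $\del$ to the full real de Rham differential on $K$ under the identification $\fk_\C\cong\fg_{1,0}$. Once these two facts are in place, everything else is standard Lie-theoretic bookkeeping.
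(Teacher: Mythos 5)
Your proof is correct and takes essentially the same route as the paper: both rest on the identification of complex Lie algebras $\fk_\C\cong\fg_{1,0}$ coming from $\fg=\fk\oplus J\fk$ (whence $\Lambda^\Cdot\fg^{1,0}\cong\Lambda^\Cdot\fk_\C^\vee$ as Chevalley--Eilenberg complexes), combined with the classical fact that left-invariant forms compute the de Rham cohomology of the compact group $K$. Your realisation of this identification as the pullback $\iota^*$ along $K\hookrightarrow G$, together with the check that $d=\del$ on $\Lambda^\Cdot\fg^{1,0}$, simply makes explicit what the paper's one-line proof leaves implicit.
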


\begin{proof}
	This is a consequence of the identification of complex Lie algebras $\fg_{1,0}\cong \fg = \fk\oplus J\fk=\fk_\C$ (whence the identification $\Lambda^\Cdot \fg^{1,0}\cong \Lambda^\Cdot\fk_\C^\vee$) and the fact that the de Rham cohomology of compact Lie groups can be computed from left-invariant forms.
\end{proof}

\begin{thm}\label{thm: deg FSS ss-case}
	The Fr\"olicher spectral sequence of $X$ degenerates at page $2$, with $E_{2}^{p,q}=H^p_{dR}(K,\C)\otimes H^q(\Gamma,\C)$.
\end{thm}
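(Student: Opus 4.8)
The plan is to identify the Dolbeault double complex of $X$ with a Chevalley--Eilenberg complex, so that the Fr\"olicher spectral sequence becomes the Hochschild--Serre spectral sequence of an ideal; the computation of $E_2$ then reduces to (relative) Lie algebra cohomology. Since $X$ is complex parallelisable, both the holomorphic and the antiholomorphic cotangent bundles are trivialised by left-invariant forms, so $A_X^{p,q}\cong\Lambda^p\fg^{1,0}\otimes\Lambda^q\fg^{0,1}\otimes C^\infty(X)$, and under the splitting $\fg_\C=\fg_{1,0}\oplus\fg_{0,1}$ into two commuting ideals this is exactly the complex computing $H(\fg_\C;C^\infty(\Gamma\backslash G))=H_{dR}(X,\C)$, with $\bar\partial$ the differential of the ideal $\fg_{0,1}$ and $\partial$ that of the quotient $\fg_{1,0}$. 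First I would record that the Fr\"olicher spectral sequence is therefore the Hochschild--Serre spectral sequence of $\fg_{0,1}\triangleleft\fg_\C$, whence
\[
E_1^{p,q}=\Lambda^p\fg^{1,0}\otimes H^q(\fg_{0,1};C^\infty(X)),\qquad E_2^{p,q}=H^p\bigl(\fg_{1,0};H^q(\fg_{0,1};C^\infty(X))\bigr).
\]

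Next I would compute the $E_2$-page. Write $M^q:=H^q(\fg_{0,1};C^\infty(X))$; its residual $\fg_{1,0}$-action is inherited from the right $G$-action on $C^\infty(\Gamma\backslash G)$, which for cocompact $\Gamma$ is locally finite and semisimple by the Peter--Weyl/Matsushima decomposition of $L^2(\Gamma\backslash G)$, so $M^q$ is a semisimple $\fg_{1,0}$-module. Since $\fg_{1,0}$ is semisimple, Whitehead's lemma gives $H^p(\fg_{1,0};M^q)\cong H^p(\fg_{1,0};\C)\otimes (M^q)^{\fg_{1,0}}$; combined with Lemma~\ref{lem: maximal compact subgroup cohomology} this yields $E_2^{p,q}\cong H^p_{dR}(K,\C)\otimes\bigl[H^q(\fg_{0,1};C^\infty(X))\bigr]^{\fg_{1,0}}$. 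It then remains to identify the $\fg_{1,0}$-invariants of the antiholomorphic cohomology with $H^q(\Gamma,\C)$. Here I would use $\fg=\fk\oplus J\fk$ to compare the $\fg_{0,1}$-complex, together with its residual $\fg_{1,0}$-action, with the relative complex $C^\bullet(\fg,\fk;C^\infty(X))$ computing the de Rham cohomology of $Z=\Gamma\backslash G/K$; since $Y=G/K$ is contractible, $Z$ is aspherical and $H^\bullet_{dR}(Z)=H^\bullet(\Gamma,\C)$.

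Granting the $E_2$-formula, degeneration at the second page would follow by a dimension count. The $\partial$-closed invariant holomorphic forms are $d$-closed on $X$ and, by Lemma~\ref{lem: maximal compact subgroup cohomology}, restrict on each $K$-fibre of $X\to Z$ to a basis of $H_{dR}(K,\C)$, so Leray--Hirsch gives $\dim H^k_{dR}(X)=\sum_{p+q=k}\dim H^p_{dR}(K,\C)\cdot\dim H^q(\Gamma,\C)=\sum_{p+q=k}\dim E_2^{p,q}$. As $E_\infty$ is a subquotient of $E_2$ with the same total dimension in each degree, every higher differential vanishes and $E_2=E_\infty$.

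The hard part will be the identification in the second paragraph, namely that passing to $\fg_{1,0}$-invariants turns the antiholomorphic Lie algebra cohomology into the cohomology of the symmetric quotient $Z$, equivalently into $H^\bullet(\Gamma,\C)$. This is precisely where the interplay of the complex structure with the Cartan decomposition $\fg=\fk\oplus J\fk$ is essential and where Akhiezer's results enter; the analytic subtleties in handling $C^\infty(\Gamma\backslash G)$ as a topological $\fg_\C$-module (convergence of the Peter--Weyl decomposition, exactness of the passage to invariants) also belong here. Everything else is formal once the Fr\"olicher sequence is recognised as the Hochschild--Serre sequence of $\fg_{0,1}$.
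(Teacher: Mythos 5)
Your middle step is where the argument breaks as written. The claim that the right regular $G$-action on $\mathcal{C}^\infty(\Gamma\backslash G)$ is ``locally finite and semisimple by Peter--Weyl/Matsushima'' is false: $G$ is noncompact, Peter--Weyl does not apply, and the Matsushima decomposition of $L^2(\Gamma\backslash G)$ is a Hilbert-space direct sum of (generally infinite-dimensional) irreducible unitary representations, so $\mathcal{C}^\infty(\Gamma\backslash G)$ is nothing like an algebraically semisimple $\fg_\C$-module. Semisimplicity of $M^q=H^q(\fg_{0,1};\mathcal{C}^\infty(X))=H^{0,q}_{\delbar}(X)$ itself can be rescued differently ($M^q$ is finite-dimensional by elliptic theory on the compact manifold $X$, so Weyl's complete reducibility applies), but the actual crux, $(M^q)^{\fg_{1,0}}\cong H^q(\Gamma,\C)$, is left unproved: your proposed comparison with the relative complex $C^\Cdot(\fg,\fk;\mathcal{C}^\infty(X))$ has no evident map in either direction --- that complex consists of $\fk$-basic cochains on $\fg/\fk$, whereas you need $\fg_{1,0}$-invariants of $\fg_{0,1}$-cohomology --- and filling this in would essentially amount to reproving Akhiezer's theorem. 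The correct repair is to quote Akhiezer in full strength, as the paper does (Theorem \ref{thm: Akhiezer's description of Dolbeault cohomology}): it gives not merely the invariants but $H^{0,q}_{\delbar}(X)\cong H^q(\Gamma,\C)$ as a \emph{trivial} $G$-module. Then the $\fg_{1,0}$-action on $M^q$ is trivial outright, $d_1=d_{\fg_{1,0}}\otimes\operatorname{Id}$ on $E_1^{p,q}=\Lambda^p\fg^{1,0}\otimes H^q(\Gamma,\C)$, and $E_2^{p,q}=H^p(\fg_{1,0})\otimes H^q(\Gamma,\C)=H^p_{dR}(K,\C)\otimes H^q(\Gamma,\C)$ by Lemma \ref{lem: maximal compact subgroup cohomology}, with no Whitehead lemma or invariants discussion needed.

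With that repair, your proof is sound and its remaining ingredients are a genuine, and in places cleaner, repackaging of the paper's first proof. Your identification of the Fr\"olicher spectral sequence with the Hochschild--Serre sequence of $\fg_{0,1}\triangleleft\fg_\C$ is correct (the two summands are commuting ideals, so the filtrations and differentials match, and the construction is purely algebraic --- no topological-module subtleties arise at this point) and it is the structured version of the paper's Claim 3, where the same conclusion is drawn directly from the $G$-module structure of $E_1$. Your Leray--Hirsch count replaces the paper's Claims 1 and 2 (degeneration of the Hochschild--Serre sequence for $\fk\subset\fg$ and of the Serre sequence for $X\to Z$, compared via the averaging map $\mu$): your observation that CE-closed elements of $\Lambda^p\fg^{1,0}$ are $d$-closed on $X$ and restrict on each fibre to classes spanning $H_{dR}(K,\C)$ is correct and yields $\dim H^k_{dR}(X)=\sum_{p+q=k}\dim E_2^{p,q}$ in one stroke, after which the subquotient dimension count forces degeneration exactly as you say. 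One shared caveat, not a new defect of yours: both your Leray--Hirsch argument and the paper's Serre spectral sequence treat $X\to Z$ as an honest fibre bundle over a manifold, which tacitly requires $\Gamma$ to act without fixed points on $Y=G/K$ (e.g.\ $\Gamma$ torsion-free); with torsion one must work with $\C$-coefficients on the orbifold $Z$, or note that the paper's second proof, which computes $H^\Cdot(\Gamma,\C)$ via $\Gamma$-invariant forms on $Y$, avoids $Z$ altogether.
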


We will give two proofs of this theorem, each one highlighting somewhat different aspects. Both will use the following computation of Dolbeault cohomology by Akhiezer.
Note that the left-multiplication of $G$ on $X$ induces a holomorphic representation of $G$ on the Dolbeault cohomology $H_{\delbar}^{p,q}(X)$.

\begin{thm}[Akhiezer \cite{akhiezer_group_1997}]\label{thm: Akhiezer's description of Dolbeault cohomology}
There is a $G$-module isomorphism
	\[H_{\delbar}^{p,q}(X)=\Lambda^p \fg^{1,0}\otimes H^q(\Gamma,\C)\]
where $\Lambda^p \fg^{1,0}$ is a $G$-module induced by the adjoint representation and $H^q(\Gamma,\C)$ is a trivial $G$-module.
\end{thm}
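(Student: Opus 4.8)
The plan is to reduce the statement to the structure sheaf and then to group cohomology via the Stein property of $G$. First I would exploit that $X=\Gamma\backslash G$ is complex parallelisable: its holomorphic cotangent bundle is holomorphically trivial with fibre the space $\fg^{1,0}$ of left-invariant $(1,0)$-forms, so $\Omega^p_X\cong \mathcal{O}_X\otimes_\C\Lambda^p\fg^{1,0}$ as holomorphic vector bundles. Taking $\delbar$-cohomology yields a natural isomorphism $H^{p,q}_{\delbar}(X)\cong \Lambda^p\fg^{1,0}\otimes_\C H^{0,q}_{\delbar}(X)$, which already splits off the factor $\Lambda^p\fg^{1,0}$. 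Since the natural $G$-action on cohomology noted before the theorem carries left-invariant $(p,0)$-forms into one another through the adjoint representation, this factor acquires exactly the $G$-module structure asserted. It then remains to identify $H^{0,q}_{\delbar}(X)=H^q(X,\mathcal{O}_X)$, as a $G$-module, with $H^q(\Gamma,\C)$ equipped with the trivial action.

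For the second step I would descend to the lattice. A connected complex semisimple Lie group is a linear algebraic group and in particular Stein, so Cartan's Theorem~B gives $H^q(G,\mathcal{O}_G)=0$ for $q>0$, while $H^0(G,\mathcal{O}_G)=\mathcal{O}(G)$ is the Fréchet algebra of holomorphic functions. As $\Gamma$ acts freely, properly discontinuously and cocompactly on the contractible manifold $G$, the manifold $X$ is a $K(\Gamma,1)$ and the descent spectral sequence $H^p(\Gamma,H^q(G,\mathcal{O}_G))\Rightarrow H^{p+q}(X,\mathcal{O}_X)$ collapses. This produces an isomorphism $H^{0,q}_{\delbar}(X)\cong H^q(\Gamma,\mathcal{O}(G))$, where $\Gamma$ acts on $\mathcal{O}(G)$ by translation; the residual, commuting $G$-action on $\mathcal{O}(G)$ is the one of the theorem, and is what I must keep track of.

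The third step is to decompose $\mathcal{O}(G)$ under the commuting $\Gamma$- and $G$-actions. By the (algebraic) Peter--Weyl description of matrix coefficients, $\mathcal{O}(G)$ is the completion of $\bigoplus_\pi V_\pi\otimes V_\pi^\vee$, the sum running over the irreducible finite-dimensional holomorphic representations of $G$, with the $\Gamma$-action on $V_\pi$ and the commuting $G$-action on $V_\pi^\vee$. Granting that the (continuous) group-cohomology functor commutes with this decomposition, I would obtain $H^q(\Gamma,\mathcal{O}(G))\cong\bigoplus_\pi H^q(\Gamma,V_\pi)\otimes V_\pi^\vee$ as $G$-modules. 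The trivial representation contributes precisely $H^q(\Gamma,\C)$ with $V_\pi^\vee=\C$ a trivial $G$-module, which is the claimed answer; so everything comes down to the vanishing $H^q(\Gamma,V_\pi)=0$ for every nontrivial irreducible $\pi$. Note that this is consistent with the dependence of $H^{0,q}_{\delbar}(X)$ on the lattice, which is entirely carried by the term $H^q(\Gamma,\C)$.

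The main obstacle is exactly this vanishing, together with the analytic justification for interchanging cohomology with the completed sum. For the vanishing I would invoke Matsushima's formula for the cocompact quotient, $H^q(\Gamma,V_\pi)\cong\bigoplus_\sigma m(\sigma)\,H^q(\fg,\fk;H_\sigma^\infty\otimes V_\pi)$, and then Wigner's lemma, which forces a nonzero summand to have the infinitesimal character of $H_\sigma$ matched to that of $V_\pi^\vee$. Here the hypothesis that $G$ is \emph{complex} semisimple is decisive: $K$ and $G$ have unequal rank, so $G$ has no discrete series, and a Borel--Wallach type vanishing then excludes any contributing $\sigma$ once $V_\pi$ is nontrivial. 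The analytic point---that $\mathcal{O}(G)$ is a nuclear Fréchet $\Gamma$-module and that the cohomology of the cocompact $\Gamma$ respects the isotypic decomposition---I would settle by working with the smooth Dolbeault complex on the compact $X$ and using that all cohomologies in sight are finite-dimensional, so that in each degree only finitely many $\pi$ can contribute.
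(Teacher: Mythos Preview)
The paper does not prove this statement; it is quoted from Akhiezer \cite{akhiezer_group_1997} and used as a black box in both proofs of Theorem~\ref{thm: deg FSS ss-case}. So there is no in-paper argument to compare against, and I can only assess your outline on its own terms.

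Your first two reductions are sound. Triviality of $\Omega^p_X$ yields the tensor factor $\Lambda^p\fg^{1,0}$ with its coadjoint $G$-structure, and the Cartan--Leray spectral sequence for the covering $G\to X$ collapses by Cartan's Theorem~B (since $G$ is affine, hence Stein) to give $H^q(X,\mathcal O_X)\cong H^q(\Gamma,\mathcal O(G))$. One correction is needed: $G$ is \emph{not} contractible and $X$ is \emph{not} a $K(\Gamma,1)$. A simply connected complex semisimple group deformation-retracts onto its maximal compact subgroup $K$, so already $\pi_3(G)\neq 0$; for $G=\operatorname{SL}_2(\C)$ one has $H^3(X,\C)\neq H^3(\Gamma,\C)$ in general. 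The slip is harmless for the spectral-sequence step (only the Stein property is used), but the sentence should be removed.

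The substance is in your third step, and here two genuine gaps remain. First, the passage from the Fr\'echet $\Gamma$-module $\mathcal O(G)$ to an isotypic decomposition is not settled by algebraic Peter--Weyl for $\C[G]$: the space $\mathcal O(G)$ is strictly larger, and commuting $\Gamma$-cohomology with the relevant completion needs an actual argument (your finite-dimensionality remark is a good heuristic but not a proof). Second, and more importantly, the vanishing $H^q(\Gamma,V_\pi)=0$ for every nontrivial holomorphic irreducible $V_\pi$ is precisely where the content of Akhiezer's theorem lives, and your justification ``no discrete series, hence Borel--Wallach vanishing'' is too vague to count as a proof. Absence of discrete series does not by itself prevent unitary principal series from contributing to $(\fg,\fk)$-cohomology; what one actually has to check is that for coefficients of \emph{holomorphic} type---highest weight $(\lambda,0)$ under $\fg_\C\cong\fg_{1,0}\oplus\fg_{0,1}$ with $\lambda\neq 0$---the resulting infinitesimal character is not attained by any unitary representation of the complex group. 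This can be done via the classification of cohomological representations for complex groups, but it requires a real argument, not an appeal to a slogan. Alternatively, Akhiezer's own proof proceeds rather differently, analysing the holomorphic $G$-module structure on $H^q(X,\mathcal O_X)$ directly rather than through lattice cohomology with coefficients; you may find that route shorter.
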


\begin{proof}[Proof of Theorem \ref{thm: deg FSS ss-case} via 4 spectral sequences]\mbox{}

{\em Claim 1.  The Hochschild-Serre spectral sequence for  $\fk\subset \fg$ degenerates  at the second page.} 
We notice that $H^{\ast}( \fg_\C)=H^{\ast}(\fg_{1,0})\otimes H^{\ast}(\fg_{0,1})\cong H^{\ast}(\fk_{\C})\otimes \overline{H^{\ast}(\fk_{\C})}$ by Lemma \ref{lem: maximal compact subgroup cohomology}.
Let $E^{p,q}_{r (HS)}$ be the Hochschild-Serre spectral sequence for  $\fk\subset \fg$.
We have  $E^{p,q}_{2 (HS)}=H^{p}(\fg; \fk, \C)\otimes H^{q}( \fk_{\C})=\left(\Lambda^{p}(J\fk)_{\C}^{\ast}\right)^{\fk}\otimes H^{q}( \fk_{\C})$ by the decomposition $\fg = \fk+J\fk$ with $[J\fk, J\fk]\subset \fk$ and $[\fk, J\fk]\subset J\fk$.
Since $J$ gives a $\fk$-module isomorphism $\fk\cong J\fk$, we have 
$H^{\ast}(\fk_{\C})\cong \left(\Lambda^{p}(\fk)^{\ast}_{\C}\right)^{\fk} \cong \left(\Lambda^{p}(J\fk)_{\C}^{\ast}\right)^{\fk}$.
Thus the claim follows.

{\em Claim 2.  The Serre spectral sequence for the fiber bundle    $X\to Z$ degenerates at the second page.} 	Let $E^{p,q}_{r (S)}$ be the Serre spectral sequence for the fiber bundle    $X\to Z$.
Since  $X\to Z$ is a principal $K$-bundle, we have $E^{p,q}_{2 (S)}=H^{p}_{dR}(Z)\otimes H^{q}_{dR}(K)$.
By the multiplicative structure on the spectral sequence, we have $d_{2}^{p,q}={\rm Id}_{H^{p}_{dR}(Z)}\otimes d_{2}^{0,q}$.
We notice that the inclusion $i:\Lambda \fg^{\ast}\subset A_{X}^{\ast}$ induces  a morphism $i^{p,q}_{r}:E^{p,q}_{r(HS)}\to E^{p,q}_{r(S)}$.
By the averaging, we have a map $\mu:  A_{X}^{\ast}\to \Lambda \fg^{\ast}$ such that $\mu\circ i={\rm Id}$.
This implies that $i^{p,q}_{r}:E^{p,q}_{r(HS)}\to E^{p,q}_{r(S)}$ is injective.
By $E^{0,q}_{r(HS)}\cong H^{q}(\fk)\cong H^{q}_{dR}(K)$, we can say that $i^{0,q}_{2}:E^{0,q}_{r(HS)}\to E^{0,q}_{r(S)}$ is an isomorphism.
By the first claim, we have $d_{2}^{0,q}=0$ on $E^{0,q}_{2(HS)} $ and hence $d_{2}^{0,q}=0$ on $E^{0,q}_{2(S)} $.
This implies $d_{2}^{p,q}=0$ on $E^{p,q}_{2(S)} $.
By the same argument, we can say $d_{r}^{p,q}=0$ on $E^{p,q}_{r(S)} $ for any $r\ge 2$.

{\em Claim 3. The Fr\"olicher spectral sequence of $X$ degenerates at the second page}.
Let $E^{p,q}_{r (FS)}$ be the Serre spectral sequence for the fiber bundle    $X\to Z$.
By Akhiezer's theorem, we have $E^{p,q}_{1 (FS)}=H^{p,q}_{\delbar}(X)=\Lambda^p \fg^{1,0}\otimes H^q(\Gamma,\C)$.
The differential $d_{1}^{p,q}$ on  $E^{p,q}_{1 (FS)}= H^{p,q}_{\delbar}(X)$ is induced by the differential $\del$ 
and we can easily check that it is determined by the $\fg_{1,0}$-module structure of $H^{0,q}_{\delbar}(X)=H^q(\Gamma,\C)$.
Since $H^q(\Gamma,\C)$ is a trivial $G$-module,  $\fg_{1,0}$ acts trivially on $H^{0,q}(X)$ and 
we can say that the differential $\del$ induces a trivial differential on $H^q(\Gamma,\C)$.
Thus we have $E^{p,q}_{2 (FS)}\cong H^p(\fg_{1,0})\otimes H^q(\Gamma,\C)$.
By the above argument, we have $E^{p,q}_{2 (S)}\otimes \C=H^{p}(Z,\C)\otimes H^{q}(K,\C)\cong  H^p(\Gamma,\C)\otimes H^{q}(\fk_{\C})\cong H^p(\Gamma,\C)\otimes H^{q}( \fg_{1,0})$.
By the $E_{2}$-degeneration of $E^{p,q}_{r(S)} $, for any $r\in \Z$
we have $\sum_{p+q=r}\dim E^{p,q}_{2 (FS)}=\sum_{p+q=r}\dim E^{p,q}_{2 (S)}=\dim H^{r}_{dR}(X)$ and hence the claim follows.
\end{proof}

\begin{proof}[Proof of Theorem \ref{thm: deg FSS ss-case} via lattice-cohomology subcomplex of $A_X$]
	The lattice cohomology $H(\Gamma,\C)$ can be computed via the complex $A_Y^\Gamma$ of $\Gamma$-invariant forms on $Y$ \cite[Ch. VII]{borel_continuous_2000}. By pulling forms back to $G$ then pushing forward to $X$, we obtain a map of (simple) complexes
	\[
	\sigma: A_Y^\Gamma\longrightarrow A_X
	\]
	We will also consider the projection map 
	\[
	\pr: A_X\rightarrow (A^{0,\Cdot}_X,\delbar)
	\]
	which is a map of complexes and induces the `edge maps' 
	\[
	H_{dR}^q(X)\rightarrow H_{\delbar}^q(X)
	\]
	{\em Claim: The composition $\pr\circ\sigma$ induces isomorphisms $H^q(\Gamma,\C)\cong E_1^{0,q}=H^q(\Gamma,\C)$.}\\
	
	Admitting the claim, the edge maps are surjective, hence there can be no differentials starting at $E_r^{0,q}$, for any $q$ and $r\geq 1$. Thus, by the Leibniz-rule, any possible nonzero differential has to live on $E_2$ and be of the form $\Id\otimes d_{\fg^{1,0}}$.\\
	
	In order to prove the claim, we identify (c.f. \cite[Ch. VII]{borel_continuous_2000}):
	\begin{enumerate}
		\item $A_Y^\Gamma$ with the $K$-invariants of $A:=\Lambda(\fg/\fk)^\vee_\C\otimes \cC^\infty (X,\C)$, where the action of $K$ is induced by right multiplication on $X$ on the right factor and the adjoint action on the left factor,
		\item $A_X$ with $\Lambda_\fg\otimes \cC^\infty(X,\C)$,
		\item $A^{0,\Cdot}_X$ with $\Lambda^-_\fg\otimes \cC^\infty(X,\C)$.
	\end{enumerate}

Under these identifications, consider the map $\tilde{\sigma}: A\rightarrow A_X$ induced by the inclusion $(\fg/\fk)_\C^\vee\rightarrow \fg^\vee_\C$. Since the composition $\fg_{0,1}\rightarrow \fg_\C\rightarrow (\fg/\fk)_\C$ is an isomorphism, so is $\pr\circ\tilde\sigma$. On the other hand, $\pr\circ\tilde\sigma$ is an equivariant map, therefore it identifies the $K$-invariants on the left (i.e. $A_Y^\Gamma$) with the $K$-invariants on the right. But by averaging over $K$, one sees that the complex of $K$-invariants is a direct summand in $A^{0,\Cdot}$. Therefore, the edge map $H^q(\Gamma,\C)\rightarrow E_1^{0,q}$ induced by $\pr\circ\sigma$ has to be injective. Since by Akhiezer's result both source and target have the same dimension, the claim follows.
\end{proof}

We consider  the canonical injection 
$H^{k}(\fg;\fk,\C)\hookrightarrow H^{k}(\Gamma,\C)$  associated with the locally symmetric space $Z:=\Gamma\backslash G/K$. In view of Theorem \ref{thm: Akhiezer's description of Dolbeault cohomology} it can be identified with the map induced by the inclusion of left-invariant forms $H^{0,k}_{\delbar}(\Lambda_\fg)\hookrightarrow H_{\delbar}^{0,k}(X)$.

\begin{cor}
	$X$ is a page-$1$-$\partial\bar\partial$ manifold if and only if $H^{k}(\fg,\fk,\C)\cong  H^{k}(\Gamma,\C)$ for any $k\in \Z$. 
\end{cor}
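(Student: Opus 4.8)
The corollary asserts an iff between $X$ being page-$1$-$\del\delbar$ and the equality $H^k(\fg,\fk,\C)\cong H^k(\Gamma,\C)$ for all $k$. The natural approach is to reduce everything to the comparison map $\iota: H^{0,k}_{\delbar}(\Lambda_\fg)\hookrightarrow H^{0,k}_{\delbar}(X)$, which by the remark preceding the statement is identified with the canonical injection $H^k(\fg,\fk,\C)\hookrightarrow H^k(\Gamma,\C)$. The plan is to show that $X$ is page-$1$-$\del\delbar$ if and only if this injection is surjective (hence an isomorphism) in every degree, and then to exploit the real structure on the Dolbeault complex to pass between the $(0,k)$ and $(k,0)$ columns.

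**Key inputs.** I would assemble three facts proved earlier. First, by Theorem \ref{thm: deg FSS ss-case}, the Fr\"olicher spectral sequence of $X$ already degenerates at $E_2$ with $E_2^{p,q}(X)=H^p_{dR}(K,\C)\otimes H^q(\Gamma,\C)$; so the only thing the page-$1$-$\del\delbar$-property adds is the \emph{purity} of the induced Hodge structure on $H^\Cdot_{dR}(X)$, equivalently (by Proposition \ref{prop: page-1-ddbar property}(2) and Remark \ref{rem: page-1-ddbar}) the symmetry $\dim E_2^{p,q}(X)=\dim E_2^{q,p}(X)$ together with the isomorphism $\bigoplus E_2^{p,q}\to H^\Cdot_{dR}(X)$. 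Second, by Lemma \ref{lem: li-forms p-1-ddbar} and Lemma \ref{lem: maximal compact subgroup cohomology}, the left-invariant subcomplex $\Lambda_\fg$ is \emph{itself} page-$1$-$\del\delbar$, with $E_2^{p,q}(\Lambda_\fg)=H^p(\fg_{1,0})\otimes \overline{H^q(\fg_{1,0})}=H^p_{dR}(K,\C)\otimes \overline{H^q_{dR}(K,\C)}$. Third, by Proposition \ref{prop: lie-1deldelbar}, if $\iota$ is an isomorphism for all $(p,q)$ then $X$ is page-$1$-$\del\delbar$; so one direction is essentially done once I relate the hypothesis $H^k(\fg,\fk)\cong H^k(\Gamma)$ to the full bidegree-wise isomorphism.

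**Carrying out the two directions.** For the ($\Leftarrow$) direction, I would argue that the equality $H^k(\fg,\fk,\C)\cong H^k(\Gamma,\C)$ for all $k$ forces $\iota$ to be a bidegree-wise Dolbeault isomorphism. Indeed $\dim H^{0,k}_{\delbar}(\Lambda_\fg)=\dim\overline{H^k(\fg_{1,0})}=\dim H^k(\fg,\fk,\C)$ (using the isomorphism $\fk\cong J\fk$ from Claim 1 of the preceding proof), and the hypothesis makes the injection $\iota$ an isomorphism in the $(0,k)$-column; conjugating via the real structure handles the $(k,0)$-column, and since $H^{p,q}_{\delbar}(X)=\Lambda^p\fg^{1,0}\otimes H^q(\Gamma,\C)$ is by Akhiezer's Theorem \ref{thm: Akhiezer's description of Dolbeault cohomology} built out of $\Lambda^p\fg^{1,0}$ (which comes from $\Lambda_\fg$) tensored with the $(0,q)$-data, the isomorphism in the $(0,q)$-column propagates to every bidegree. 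Then Proposition \ref{prop: lie-1deldelbar} applies and $X$ is page-$1$-$\del\delbar$. For the ($\Rightarrow$) direction, I would show contrapositively that if $\iota$ is \emph{not} surjective in some degree $k$, then the Hodge structure on $H^k_{dR}(X)$ cannot be pure. Quantitatively: $\dim E_2^{0,k}(X)=\dim H^k(\Gamma,\C)$ is strictly larger than the left-invariant contribution, whereas purity together with the symmetry $\dim E_2^{p,q}=\dim E_2^{q,p}$ and the degeneration would force the dimensions coming from $\Lambda_\fg$ and from $X$ to match in the extremal columns $p=0$ and $q=0$, a contradiction. Concretely I would compare the alternating/total dimension counts of $E_2^{\Cdot,\Cdot}(X)=H^p_{dR}(K)\otimes H^q(\Gamma)$ against the Hodge numbers that purity would impose.

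**The main obstacle.** The delicate point is the ($\Rightarrow$) direction: extracting a genuine failure of purity from a mere failure of $\iota$ to be surjective. The subtlety is that $E_2$-degeneration already holds unconditionally, so I cannot use non-degeneration as the obstruction; I must instead track how the \emph{Hodge filtration} sits on $H^k_{dR}(X)$ and verify that the excess of $H^k(\Gamma,\C)$ over $H^k(\fg,\fk,\C)$ obstructs the direct-sum decomposition $H^k_{dR}=\bigoplus_{p+q=k}H^{p,q}$. I expect the cleanest route is via the dimension criterion of Proposition \ref{prop: page-1-ddbar property}(3), $h^r_A+h^r_{BC}=h^r_{\delbar}+h^r_{\del}$: using the averaging splitting $A_X=\Lambda_\fg\oplus A_X/\Lambda_\fg$ from the end of Section \ref{sec: prelim}, one controls $h^{p,q}_{BC}$ and $h^{p,q}_A$ and shows the criterion fails exactly when $\iota$ is not an isomorphism, pinning the equivalence precisely to the condition $H^k(\fg,\fk,\C)\cong H^k(\Gamma,\C)$.
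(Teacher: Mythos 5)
Your proposal is correct and, at its core, is the paper's own proof: the forward direction is exactly the symmetry argument in your second paragraph --- $X$ page-$1$-$\del\delbar$ plus the real structure gives $\dim E_2^{p,q}=\dim E_2^{q,p}$ by Remark \ref{rem: page-1-ddbar}, which applied to the extremal bidegrees $(k,0)$ versus $(0,k)$ of $E_2^{p,q}=H^p_{dR}(K,\C)\otimes H^q(\Gamma,\C)$ from Theorem \ref{thm: deg FSS ss-case} yields $\dim H^k_{dR}(K,\C)=\dim H^k(\Gamma,\C)$, whence the canonical injection $H^{k}(\fg;\fk,\C)\hookrightarrow H^{k}(\Gamma,\C)$ is an isomorphism since $H^{k}(\fg;\fk,\C)\cong H^k_{dR}(K,\C)$; and the backward direction is the same dimension count via Akhiezer's Theorem \ref{thm: Akhiezer's description of Dolbeault cohomology} and Lemma \ref{lem: maximal compact subgroup cohomology}, with injectivity from averaging upgrading equality of dimensions to the bidegree-wise isomorphism needed for Proposition \ref{prop: lie-1deldelbar}. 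Your closing ``main obstacle'' paragraph is, however, an unfounded worry: since the left-invariant contribution $H^k(\fg;\fk,\C)\cong H^k_{dR}(K,\C)$ \emph{is} the full column $E_2^{k,0}$, the $E_2$-symmetry alone already converts non-surjectivity of $\iota$ into a violation of purity, with no tracking of the Hodge filtration required; the proposed detour through Proposition \ref{prop: page-1-ddbar property}(3) would in fact be harder to carry out, since nothing in the paper (or in your sketch) controls $h^{p,q}_{BC}$ and $h^{p,q}_{A}$ of the complement $A_X/\Lambda_\fg$.
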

\begin{proof}
Suppose $X$ is a page-$1$-$\partial\bar\partial$ manifold.
By Remark \ref{rem: page-1-ddbar}, we have $\dim E_{2}^{p,q}=\dim E_{2}^{q,p}$.
By Theorem \ref{thm: deg FSS ss-case}, this implies $\dim H^{k}_{dR}(K)=\dim H^{k}(\Gamma)$ for any $k\in \Z$.
As we saw in the first proof of Theorem \ref{thm: deg FSS ss-case}, we have $H^{k}(\fg;\fk)\cong H^{k}_{dR}(K)$.
Thus we have $\dim H^{k}(\fg;\fk,\C)=\dim H^{k}(\Gamma,\C)$ and so the injection 
$H^{k}(\fg;\fk,\C)\hookrightarrow H^{k}(\Gamma,\C)$ is an isomorphism.

We assume $H^{k}(\fg,\fk,\C)\cong  H^{k}(\Gamma,\C)$ for any $k\in \Z$.
Then by Theorem \ref{thm: Akhiezer's description of Dolbeault cohomology}
 and Lemma \ref{lem: maximal compact subgroup cohomology},
 we have
 \[\dim H_{\delbar}^{p,q}(X)=\dim \Lambda^p \fg^{1,0}\otimes H^q(\fg_{1,0})=\dim H_{\delbar}^{p,q}(\Lambda_\fg).
 \]
Hence the injection  $H_{\delbar}^{p,q}(\Lambda_\fg)\hookrightarrow H_{\delbar}^{p,q}(X)$ is an isomorphism and so $X$ is a page-$1$-$\partial\bar\partial$ by Proposition \ref{prop: lie-1deldelbar}.\end{proof}
\begin{rem}
For $G=\operatorname{SL_2(\C)}$ with $K=\operatorname{SU(2)}$, $b_{1}(K)=b_{2}(K)=0$ but for any $r\in\N$, there exist lattices $\Gamma\subseteq G$ such that $b_1(\Gamma)=b_2(\Gamma)=r$ (see \cite[§B]{winkelmann_complex-analytic_1995}, \cite[6.2.]{ghys_deformations_1995}).  Note that by a theorem of Raghunathan \cite{raghunathan_vanishing_1966}, $b_1(\Gamma)=0$ as soon as $G$ has no $\operatorname{SL}_2(\C)$-factor.
Moreover, for $k<{\rm rk}_{\R}\fg$, the injection $H^{k}(\fg;\fk,\C)\hookrightarrow H^{k}(\Gamma,\C)$ is an isomorphism(see \cite[VII, Corollary 4.4]{borel_continuous_2000}).
\end{rem}


\bibliographystyle{acm}

\end{document}